\documentclass[11pt,onecolumn]{article}
\usepackage[T1]{fontenc}
\usepackage[english]{babel}
\usepackage{lmodern}
\usepackage{microtype}
\usepackage[onehalfspacing]{setspace}
\usepackage[bottom=2cm]{geometry}
\usepackage{amsmath,amssymb,amsfonts,amsthm,mathtools}
\usepackage{amsrefs}
\usepackage{enumitem}
\usepackage{graphicx}
\usepackage{tikz-cd}
\usetikzlibrary{babel}
\usepackage{hyperref}
\usepackage{cleveref}

\tikzcdset{scale cd/.style={every label/.append style={scale=#1},
    cells={nodes={scale=#1}}}}

\newcommand{\N}{\mathbb{N}}

\newcommand{\C}{\mathbb{C}}

\newcommand{\A}{\mathcal{A}}

\newcommand{\R}{\mathbb{R}}

\setlist[enumerate]{label=(\roman*),itemsep=3pt}

\theoremstyle{plain}
\newtheorem{theorem}{Theorem}[section]
\newtheorem{lemma}[theorem]{Lemma}
\newtheorem{corollary}[theorem]{Corollary}
\newtheorem{proposition}[theorem]{Proposition}

\theoremstyle{definition}
\newtheorem{definition}[theorem]{Definition}

\theoremstyle{remark}
\newtheorem{remark}[theorem]{Remark}

\title{Quantifier Elimination and Invariant Theory: Applications to Quaternions, Octonions, and Other Algebras}
\author{Maximilian Illmer}
\date{\today}

\begin{document}
\maketitle

\begin{abstract}
\noindent
We build on our previous paper \cite{constructive} by using the general method introduced there in conjunction with invariant theory. This yields quantifier elimination results for the classical quaternions and octonions, as well as criteria for other classes of finite-dimensional algebras over real closed and algebraically closed fields. In particular, the first two applications answer an open question posed in~\cite{savi}.
\end{abstract}

\section{Introduction and Preliminaries}

\subsection{Introduction}

Quantifier elimination is a central topic in model theory, with deep connections to algebra, geometry, and computation, especially in the real case \cites{basu,boch,pd}. Classical results establish quantifier elimination for the theories of algebraically closed fields and real closed fields in the languages $\mathcal{F}:=(+,-,\times,0,1)$ and $(\mathcal{F},\leq)$, respectively.

In contrast, much less is known about quantifier elimination for noncommutative algebras. Even for natural and well-studied structures such as the classical quaternions $\mathbb{H}(k)$ or octonions $\mathbb{O}(k)$, for $k$ real closed, the model-theoretic behavior is considerably more subtle. In particular, it was recently shown in \cite{savi} that their first-order theories in the pure ring language $\mathcal{F}$ do not admit quantifier elimination, and the question of which language extensions yield quantifier elimination was left open.

In this paper, we develop a systematic method for obtaining quantifier elimination for finite-dimensional algebras via invariant theory. Building on the constructive framework introduced in \cite{constructive}, we show that the existence of sufficiently expressive invariants for diagonal automorphism actions allows one to transfer quantifier elimination from the base field to the algebra.

As applications, we prove quantifier elimination for the classical quaternions and octonions over real closed fields in a natural language expansion by conjugation $\bar \ $ and the scalar order relation $\leq$. More generally, we obtain criteria for quantifier-elimination for finite-dimensional algebras $\A$ over real closed and algebraically closed fields in terms of their invariant algebras $k[\A^m]^G$, for $G\leq\mathrm{Aut}_k(\A)$, and the expressibility of their generators in suitable language extensions of $\mathcal{F}$.

The method is constructive whenever quantifier elimination in the base field is constructive and our "transfer function" (see \Cref{theo}) is explicitly known. In particular, for real quaternions, eliminating $l$ quantified variables from a conjunction of $r$ atomic formulas in $l+m$ variables reduces to a quantifier-elimination problem over $k$ with $4(l+m)$ quantified scalar variables and $m+m^2+m^3$ free invariant variables. The resulting conjunction consists of $4r+m+m^2+m^3$ atomic formulas in $4(l+m)+m+m^2+m^3$ variables. For octonions, the analogous statement holds with $4$ replaced by $8$ and $m+m^2+m^3$ replaced by $o_m=|I(m)|=\mathcal{O}(m^4)$. These explicit bounds suggest that implementation may be feasible for formulas with favorable block structure, although the practical complexity of course depends on the quantifier-elimination procedure used over the base field (compare also to the complexity analysis in \cite{constructive}).

\subsection{Preliminaries from Algebra}

We briefly introduce real closed fields, algebraically closed fields, quaternions and octonions.\\

A field $k$ is \emph{algebraically closed} if every nonconstant polynomial over $k$ has a zero in $k$. The most prominent example is $k=\C$. Every field is contained in an algebraic extension that is algebraically closed and unique up to $k$-isomorphism, called its \textit{algebraic closure} $\bar k$.
\medskip
A field $k$ is \emph{real closed} if $-1$ is not a square in $k$ and the field extension $k(\sqrt{-1})$ is algebraically closed (in which case it is the algebraic closure of $k$). There are other characterizations \cite{pd}, but this one is the easiest to state. Here, $\R$ is the most prominent example. Real closed fields have a unique field ordering, where an element is nonnegative if and only if it is a square. Every ordered field is contained in a real closed field.

If $k$ is real closed, then on $k(\sqrt{-1})$ one obtains an \emph{involution}, just as complex conjugation on $\C$, by multiplying the coefficient of $\sqrt{-1}$ by $-1$. As we will see below, this also applies to the quaternions $\mathbb{H}$ and octonions $\mathbb{O}$ over $k$.

For the remainder of this subsection, $k$ denotes a real closed field.

\medskip

The (non-split, classical) \emph{quaternions over $k$}, denoted $\mathbb{H}(k)$, form the $k$-algebra generated by elements $\mathbf{i},\mathbf{j}$ satisfying
\[
\mathbf{i}^2=\mathbf{j}^2=-1,
\qquad
\mathbf{i}\mathbf{j}=-\mathbf{j}\mathbf{i}.
\]
Setting $\mathbf{k}=\mathbf{i}\mathbf{j}$, one obtains
\[
\mathbf{i}^2=\mathbf{j}^2=\mathbf{k}^2=-1,
\qquad
\mathbf{i}\mathbf{j}=\mathbf{k}=-\mathbf{j}\mathbf{i}.
\]
Every element of $\mathbb{H}(k)$ has a unique expression
\[
x=a+b\mathbf{i}+c\mathbf{j}+d\mathbf{k},
\qquad a,b,c,d\in k,
\]
i.e.\ the underlying vector space is $k^4$.

In this vector space representation we introduce the notation $\operatorname{Re}(x):=a$ and $\operatorname{Im}(x):=(b,c,d)$.

$\mathbb{H}(k)$ is central simple and associative over $k$. It carries a standard involution (conjugation)
\[
\overline{x}=a-b\mathbf{i}-c\mathbf{j}-d\mathbf{k},
\]
whose associated quadratic form
\[
N(x)=x\overline{x}=a^2+b^2+c^2+d^2
\]

is positive definite and multiplicative. In particular, $\mathbb{H}(k)$ is, up to isomorphism, the unique $4$-dimensional central associative division algebra over $k$ and a composition algebra.

\medskip

The (classical, non-split) \emph{octonions over $k$}, denoted $\mathbb{O}(k)$, are obtained from $\mathbb{H}(k)$ by the Cayley--Dickson doubling process. As a $k$-vector space,
\[
\mathbb{O}(k)=\mathbb{H}(k)\oplus \mathbb{H}(k)\boldsymbol{\ell},
\qquad \boldsymbol{\ell}^2=-1,
\]
with multiplication defined by
\[
(a+b\boldsymbol{\ell})(c+d\boldsymbol{\ell})
=
(ac-\overline{d}b)+(da+b\overline{c})\boldsymbol{\ell},
\qquad a,b,c,d\in\mathbb{H}(k).
\]

The algebra $\mathbb{O}(k)$ is $8$-dimensional, noncommutative, nonassociative but \emph{alternative}. Just as for quaternions, we use the vector space structure to define, for $x=a+b\boldsymbol{\ell}$,
\[
\operatorname{Re}(x):=\operatorname{Re}(a),
\qquad
\operatorname{Im}(x):=(\operatorname{Im}(a),\operatorname{Re}(b),\operatorname{Im}(b)).
\]
In some literature (for example \cites{schwarzoct, alginv}), $2\operatorname{Re}$ is also denoted as the \emph{trace} $\operatorname{tr}$.

Conjugation extends by
\[
\overline{a+b\boldsymbol{\ell}}=\overline{a}-b\boldsymbol{\ell},
\]
and the norm
\[
N(x)=x\overline{x}
\]
remains multiplicative and positive definite. Thus $\mathbb{O}(k)$ is, up to isomorphism, the unique $8$-dimensional composition division algebra over $k$.

\subsection{Preliminaries from Logic}

The model theory, used throughout this paper will be very light. We will work in the classical setup of first-order logic and model theory, as described for example in \cites{cha, hod, pre}. We denote formal languages by calligraphic letters such as $\mathcal{L}$.
\medskip
A language can be extended by an additional symbol $S$; we denote this by $(\mathcal{L},S)$. In order to speak of rings, a language $\mathcal{L}$ needs two binary function symbols $+,\cdot$, a unary function symbol $-$, and two constant symbols $0,1$. With these symbols one can write down the ring axioms and the axioms of algebraically closed or real closed fields, for example. Throughout this paper, we denote this specific language by $\mathcal{F}$. The complex numbers $\C$ form an obvious $\mathcal{F}$-structure, with the usual interpretations.
\medskip
If we want to talk about ordered rings, we can extend the language by the relation symbol $\leq$. An obvious $(\mathcal{F},\leq)$-structure is $\R$ with the usual interpretations. Note that for real closed fields, the extension by $\leq$ is to some extent redundant, since nonnegative elements are squares and nonnegativity can thus be expressed in $\mathcal{F}$ alone. However, this uses a quantifier, and to obtain quantifier elimination one needs to include the relation symbol $\leq$.

\medskip

To denote a \textit{formula} we will use small greek letters like $\phi$ and to emphasize the free variables $x_1,\dots,x_m$ of a formula $\phi$, we sometimes write $\phi(x_1,\dots,x_m)$. Given such a formula and a structure $A$, $\phi$ defines the subset of $A^m$ of all $m$-tuples that satisfy the formula, and we write $\phi(A)$ for this subset. Such sets are called \emph{definable} (some other also specify this as $\emptyset$-definable).

\medskip

We say an $\mathcal{L}$-structure $A$ admits \emph{quantifier elimination} if every definable set is also definable without quantifiers. More precisely, for every $\mathcal{L}$-formula $\phi(x_1,\dots,x_m)$ there exists a quantifier-free $\mathcal{L}$-formula $\psi(x_1,\dots,x_m)$ with
\[
\phi(A)=\psi(A).
\]

\medskip

It is well known that algebraically closed fields, i.e.\ $\mathcal{F}$-structures satisfying the axioms of algebraically closed fields, and real closed fields, i.e.\ $(\mathcal{F},\leq)$-structures satisfying the axioms of real closed fields, admit quantifier elimination. In fact, a stronger result holds: quantifiers can be removed independently of the actual field (that is, the \emph{theory} of algebraically closed or real closed fields admits quantifier elimination).

Both results can be proven by showing that the projection of a definable set is again definable. The definable sets in algebraically closed fields are precisely the constructible sets in the Zariski topology, while those in real closed fields are precisely the semialgebraic sets. Since projections are defined by formulas of the form $\exists x\,\phi(x,x_1,\dots,x_m)$, the general case can then be deduced inductively. In the case of algebraically closed fields, a constructive proof reduces to a simple application of the Euclidean algorithm. For real closed fields the result is less trivial and is the content of the Tarski–Seidenberg Theorem.  Also in this case constructive proofs exist (see for example \cites{basu, boch, sch}).

\medskip

Let $k$ be a field in the language $\mathcal{F}$. Any unital $k$-algebra can also be considered as a structure over the language $\mathcal{F}$ by interpreting the symbols $+,-,\times$ as the respective algebra addition, subtraction, and multiplication, and $1$ and $0$ as $1_\A$ and $0_\A$, respectively.

If $k$ is an ordered field in the language $(\mathcal{F},\leq)$, a finite-dimensional $k$-algebra can also be defined over this extended language by setting $a\leq b$ precisely when $a,b\in k1_\A$ and their corresponding scalars satisfy the order on $k$. In the pure ring language, only the elements $c1_\A$ for $c$ algebraic over the prime subfield are definable. This is why one sometimes sees the language extension $(\mathcal{F},(\lambda_c)_{c\in k})$, where $\lambda_c$ denotes scalar multiplication with $c$. Since our interest is not in axiomatizing the $k$-algebra models, but in interpreting fixed $k$-algebras with a minimal language, allowing for meaningful formulas, we will forego these additional symbols.

\medskip

If the center
\[
C(\A)=\{a\in\A:\ ax=xa,\ (a,x,y)=(x,a,y)=(x,y,a)=0
\text{ for all }x,y\in\A\},
\]
where $(x,y,z):=(xy)z-x(yz)$ denotes the associator,
equals $k1_\A$ we have an easy way to identify the base field $k$ as a definable subset of $\A$. However, even if $k1_\A$ is a strict subset of the center, we can (and will) use the natural isomorphism $\iota:k\to k\cdot 1_\A$ as an external identification, for example to transform functions $f:\A^m\to k^n$ into functions with values in $\A$.

\medskip

The algebras we will be most interested in are $\mathbb{H}(k)$ and $\mathbb{O}(k)$ for real closed $k$. In the recent paper \cite{savi}, general model-theoretic properties have been investigated for theories in the language $\mathcal{F}$ whose models are, up to isomorphism, precisely the algebras $\mathbb{H}(k)$ or $\mathbb{O}(k)$, respectively, as $k$ ranges over real closed fields. In particular, it was shown that these theories do not have quantifier elimination in $\mathcal{F}$, and the question for possible language extensions $\mathcal{F}'$ such that quantifier elimination holds, was posed.

We will interpret $\mathbb{H}(k)$ and $\mathbb{O}(k)$ as $(\mathcal{F},\leq,\bar{\ })$-structures, where $\leq$ is interpreted as described above for general $k$-algebras and $\bar{\ }$ as the conjugation, defined in our introductory section on quaternions and octonions. As it will turn out, $(\mathcal{F},\leq,\bar{\ })$ is a language extension allowing for quantifier elimination in both cases.

\medskip

Just as in our previous paper (although less explicitly stated there), the most important model-theoretic concept for proving this will be that of types and the separation thereof.

Let $A$ be a structure in the language $\mathcal{L}$. We say $\underline{a},\underline{b}\in A^m$ have the same $\emptyset$-type if for every $\mathcal{L}$-formula $\phi(x_1,\dots,x_m)$ one has
\[
\underline{a}\in \phi(A) \iff \underline{b}\in \phi(A) 
\]

\medskip

For a $\mathcal{L}$-structure $A$, a bijective function $f:A\to A$ is called an \emph{automorphism} if it preserves the respective interpretations of every constant-/function-/relation-symbol. The set of automorphisms forms a group with regards to composition, denoted $\mathrm{Aut}(A)_\mathcal{L}$. We will omit the subscript if the language $\mathcal{L}$ is clear from the context.

It is easy to see that the diagonal action of automorphisms preserves types.

\begin{theorem}\label{type}
Let $A$ be a structure in the language $\mathcal{L}$ and denote by $\mathrm{Aut}(A)$ its automorphism group. Then for $m$-tuples $\underline{a},\underline{b}\in A^m$ one has
\[
\underline{a}\in\mathrm{Orb}(\underline{b},\mathrm{Aut}(A))
\quad\Rightarrow\quad
\underline{a}\text{ and }\underline{b}\text{ are of the same }\emptyset\text{-type.}
\]
\end{theorem}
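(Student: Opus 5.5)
The plan is to prove the stronger statement that every automorphism $\sigma\in\mathrm{Aut}(A)$ preserves truth of formulas: for every $\mathcal{L}$-formula $\phi(x_1,\dots,x_m)$ and every $\underline{b}\in A^m$,
\[
\underline{b}\in\phi(A)\iff \sigma(\underline{b})\in\phi(A),
\]
where $\sigma$ acts diagonally, $\sigma(\underline{b})=(\sigma(b_1),\dots,\sigma(b_m))$. The theorem follows at once. If $\underline{a}\in\mathrm{Orb}(\underline{b},\mathrm{Aut}(A))$, then $\underline{a}=\sigma(\underline{b})$ for some $\sigma$, and the displayed equivalence is exactly the statement that $\underline{a}$ and $\underline{b}$ have the same $\emptyset$-type.

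The argument is a double structural induction.

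\textbf{Terms.} First I show that for every term $t(x_1,\dots,x_m)$ we have $\sigma(t^A(\underline{b}))=t^A(\sigma(\underline{b}))$.
\begin{enumerate}
\item For a variable, both sides equal $\sigma(b_i)$.
\item For a constant symbol $c$, the claim is $\sigma(c^A)=c^A$, which is part of the definition of automorphism.
\item For $t=f(t_1,\dots,t_n)$, I use that $\sigma$ commutes with $f^A$ and apply the induction hypothesis to each $t_i$.
\end{enumerate}

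\textbf{Formulas.} Next I induct on formulas.
\begin{enumerate}
\item For an atomic formula $R(t_1,\dots,t_n)$, the term step combines with the fact that $\sigma$ preserves $R^A$ in both directions. Here I use that $\sigma$ is bijective and that $\sigma^{-1}$ is also an automorphism, so ``preserves'' is read as an equivalence. Equality $t_1=t_2$ is handled by injectivity of $\sigma$.
\item Negation and conjunction (and the other connectives) are immediate from the induction hypothesis.
\end{enumerate}

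\textbf{Quantifier step.} For $\exists y\,\psi(y,x_1,\dots,x_m)$, the induction hypothesis is applied to $\psi$ with the enlarged tuple of free variables. If a witness $d$ satisfies $\psi(d,\underline{b})$, then $\sigma(d)$ is a witness for $\psi(\sigma(d),\sigma(\underline{b}))$. Conversely, if $e$ witnesses $\psi(e,\sigma(\underline{b}))$, then $\sigma^{-1}(e)$ witnesses $\psi(\sigma^{-1}(e),\underline{b})$, using that $\sigma^{-1}$ is an automorphism, or simply that $\sigma$ is surjective together with the biconditional in the hypothesis.

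This quantifier step is the only place where some care is needed. The induction must be run over formulas with an arbitrary number of free variables, with the claim stated as a biconditional for all tuples. Surjectivity of $\sigma$ is essential here; it is why the definition requires bijectivity rather than just an embedding. Beyond that, everything is routine.
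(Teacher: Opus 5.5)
Your proof is correct and is essentially the paper's approach: the paper gives no argument beyond remarking that this is easy to see, and your double structural induction is the standard argument it implicitly relies on. You correctly put the real content in the existential step, which needs surjectivity of $\sigma$ (equivalently $\sigma^{-1}\in\mathrm{Aut}(A)$, guaranteed by the paper's remark that automorphisms form a group) and an induction over formulas with arbitrarily many free variables.
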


\subsection{Linear Algebraic Groups and Invariant Theory}

We briefly review the notions and results from invariant theory and the theory of linear algebraic groups needed in the sequel. A complete treatment is impossible here, the references we used include \cites{GIT, bredon, borel, lie, weyl, grot, sym}. The modern treatment of geometric invariant theory is heavily scheme-theoretic but for the purpose of our paper it is luckily possible to avoid the more technical details. We will state all the relevant results, such that they are readable without any background in algebraic geometry.
In that spirit we start with the, not maximally general, definition of a linear algebraic group.

\medskip

\begin{definition}
Let $k$ be an algebraically closed field and $G\leq\mathrm{GL}_n(k)$. We say that $G$ is a \emph{linear algebraic group} (over $k$) if it is Zariski closed in $\mathrm{GL}_n(k)$, i.e.\ if there exists an ideal $I$ of polynomial functions in the matrix entries such that
\[
G=\mathcal{V}(I)\subset\mathrm{GL}_n(k).
\]
The coordinate ring of $G$ is defined as
\[
k[G]=k[\mathrm{GL}_n]/\sqrt{I},
\]
where
\[
k[\mathrm{GL}_n]
=
k[(M_{ij})_{i,j\leq n},\text{det}^{-1}]
\]
and $\sqrt{I}=\mathcal{I}(G)$ denotes the ideal of polynomial functions vanishing on $G$.
\end{definition}
\medskip

If $l$ is a subfield of $k$ and $\mathcal{I}(G)$ is generated (as a $k$-ideal) by $\mathcal{I}(G)\cap l[\mathrm{GL}_n]$, we say that $G$ is \emph{defined over} $l$. In this case the group of $l$-rational points is
\[
G(l)=G\cap\mathrm{GL}_n(l).
\]
If $G$ is a linear algebraic group defined over a field $k$ and $K/k$ is a field extension, we denote its base change by $G_K$ and its group of $K$-rational points by $G(K)$. This is especially relevant when allowing for linear algebraic groups over arbitrary fields.\\
A more general notion is that of an $l$-form of $G$ (for details see \cite{borel} or \cite{sym} for the case $l=\R$).

\medskip

As an example, the group $\mathrm{GL}_n(\C)$ is defined over $\R$, its $\R$-rational points are $\mathrm{GL}_n(\R)$, yet it has many more $\R$-forms, for example the indefinite unitary groups $\mathrm{U}(p,q)$ for $p+q=n$.

\medskip

Another example, which will become important later, is the exceptional simple linear algebraic group $G_2(\C)=\mathrm{Aut}_{\C}(\mathbb{O}(\C))$. It has two $\R$-forms: the split form, whose group of $\R$-rational points is $G_{2(2)}$, the automorphism group of the real split octonions, and the compact form, whose group of $\R$-rational points is $\mathrm{Aut}_{\R}(\mathbb{O}(\R))$, which we denote by $G_2(\R)$.

\medskip

For later results, the notion of a \emph{reductive} group will play a central role. There are many equivalent characterizations, we again give one for the case where $k$ is algebraically closed.

\begin{definition}
A linear algebraic group $G$ is called \emph{reductive} if its largest smooth connected unipotent normal subgroup is trivial.
\end{definition}

\medskip

We now recall the notion of a regular (also called rational) representation.

\begin{definition}
Let $k$ be an algebraically closed field and $G$ a linear algebraic group. A representation $(V,\pi)$ is called \emph{regular} if $V$ is a finite-dimensional $k$-vector space and the map
\[
G\to k,\qquad g \mapsto \langle w,\pi(g)v\rangle
\]
is a regular morphism, i.e.\ belongs to $k[G]$ for all $v\in V,w \in V^*$.
\end{definition}

The way we defined a linear algebraic group here as Zariski closed $G<\mathrm{GL}_n(k)$,obviously the identity gives a regular representation on $k^n$ and thus for our case the above definition can be seen only as way to clarify what "acting regularly" means, later on.

\medskip

Let $(V,\pi)$ be a regular representation of a linear algebraic group $G$. The action of $G$ on $V$ induces a natural action on the coordinate ring $k[V]$ defined by
\[
(g\cdot f)(v):=f(\pi(g^{-1})v).
\]

\begin{definition}
Let $G$ be a linear algebraic group acting by means of a regular representation on a finite-dimensional $k$-vector space $V$. The \emph{invariant algebra} is the subalgebra
\[
k[V]^G=\{f\in k[V]\mid g\cdot f=f\ \text{for all } g\in G\}.
\]
\end{definition}

\medskip

The most important example for us arises when $G\leq\mathrm{Aut}_k(\A)$ for a finite-dimensional $k$-algebra $\A$. If $G$ is Zariski closed, then it is a linear algebraic group and its natural action on $\A$ is a regular representation. This action extends diagonally to $\A^m$, yielding a regular representation and the corresponding invariant algebra $k[\A^m]^G$.

\medskip

\begin{remark}
For a finite-dimensional $k$-algebra $\A$, we distinguish the group $\mathrm{Aut}_k(\A)$ of $k$-algebra automorphisms from the model-theoretic automorphism group $\mathrm{Aut}(\A)_{\mathcal L}$. The latter need not act $k$-linearly, whereas $\mathrm{Aut}_k(\A)$ is a Zariski-closed subgroup of $\mathrm{GL}_k(\A)$ and is therefore the natural group for invariant theory. This causes no problem for our model-theoretic arguments: whenever $G\leq\mathrm{Aut}_k(\A)$ preserves the additional symbols of $\mathcal L$, we have $G\leq\mathrm{Aut}(\A)_{\mathcal L}$, so $G$-orbits are contained in $\emptyset$-types by \Cref{type}.
\end{remark}
\medskip

\begin{remark}
It should be noted that linear algebraic groups, reductiveness and regular representations are definitions we introduced in a way, as they are often naturally given in our setting and also requirements for \Cref{fin} and \Cref{orbit}. The notion of an invariant algebra can, however, be defined in exactly the same way for a general field $k$ and any group $G$ acting linearly on a finite-dimensional $k$-vector space $V$, and we denote it again by $k[V]^G$.
\end{remark}

\medskip

If two points $v,w\in V$ lie in the same $G$-orbit, then clearly $f(v)=f(w)$ for all $f\in k[V]^G$. The following fundamental result describes when the converse holds.

\begin{theorem}\label{orbit}\cite{GIT}
Let $k$ be an algebraically closed field, $G$ a reductive linear algebraic group, and $(V,\pi)$ a regular representation. Then the invariant algebra separates orbit closures:
\[
\forall f\in k[V]^G:\ f(v)=f(w)
\quad\Longleftrightarrow\quad
\overline{\mathrm{Orb}(v,G)}\cap
\overline{\mathrm{Orb}(w,G)}\neq\emptyset.
\]
\end{theorem}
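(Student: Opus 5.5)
The direction ``$\Leftarrow$'' is elementary, and the direction ``$\Rightarrow$'' is the classical separation property of reductive group actions. I would prove that direction in its contrapositive form by separating two disjoint closed $G$-stable sets with a single invariant.

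For ``$\Leftarrow$'', fix $f\in k[V]^G$. It is constant on $\mathrm{Orb}(v,G)$, because $f(\pi(g)v)=(g^{-1}\cdot f)(v)=f(v)$. Being a polynomial function, it is Zariski continuous, so the fibre $f^{-1}(f(v))$ is closed. This fibre contains the orbit, hence also $\overline{\mathrm{Orb}(v,G)}$. The same holds for $w$. If some $u$ lies in both orbit closures, then $f(v)=f(u)=f(w)$.

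For ``$\Rightarrow$'', suppose the closures $Z_1=\overline{\mathrm{Orb}(v,G)}$ and $Z_2=\overline{\mathrm{Orb}(w,G)}$ are disjoint. The plan has three steps.

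\begin{enumerate}
\item \emph{$Z_1,Z_2$ are closed and $G$-stable.} The translations $\pi(g)$ are homeomorphisms that permute the orbit, so they preserve its closure.
\item \emph{A non-invariant separating function.} Let $I_j=\mathcal I(Z_j)\subseteq k[V]$. These ideals are $G$-stable. Since $Z_1\cap Z_2=\emptyset$, the Nullstellensatz gives $\sqrt{I_1+I_2}=k[V]$, so $I_1+I_2=k[V]$. Write $1=a_1+a_2$ with $a_j\in I_j$. Then $a_1$ vanishes on $Z_1$ and equals $1$ on $Z_2$, but $a_1$ need not be invariant.
\item \emph{Upgrading $a_1$ to an invariant.} This is where reductivity enters.
\end{enumerate}

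For step (iii), consider first $\mathrm{char}\,k=0$. Then reductive means linearly reductive, so there is a Reynolds operator $R\colon k[V]\to k[V]^G$. It is the $G$-equivariant projection onto the trivial isotypic component. It is $k[V]^G$-linear, fixes invariants, and maps every $G$-stable subspace into itself. Applying $R$ to $1=a_1+a_2$ gives $1=R(a_1)+R(a_2)$ with $R(a_j)\in I_j^G$. So $f=R(a_1)$ is an invariant with $f|_{Z_1}=0$ and $f|_{Z_2}=1$, and in particular $f(v)\neq f(w)$.

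In positive characteristic there is no Reynolds operator, and this is the main obstacle. I would pass to the coordinate ring of $Z=Z_1\sqcup Z_2$, where $k[Z]=k[Z_1]\times k[Z_2]$. The element $\bar a_1=(0,1)$ is $G$-invariant in $k[Z]$, because both factors are $G$-stable. I would then invoke geometric reductivity (Haboush's theorem): for a surjection $k[V]\to k[Z]$ of $G$-algebras, every invariant of $k[Z]$ has some power $p^e$ that lifts to an invariant of $k[V]$. Since $(0,1)^{p^e}=(0,1)$, a lift $f\in k[V]^G$ again satisfies $f|_{Z_1}=0$ and $f|_{Z_2}=1$. This finishes the contrapositive.

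Since this step is genuinely deep, I would cite it from \cite{GIT}, or Haboush's theorem, rather than reprove it. For the applications in this paper $k$ is algebraically closed of characteristic $0$, so only the Reynolds-operator case is actually needed.
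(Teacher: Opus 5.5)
Your argument is correct. The paper gives no proof of its own and only cites \cite{GIT}, and your route (separate the disjoint closed $G$-stable sets $Z_1,Z_2$ by an invariant built from $1=a_1+a_2$, via the Reynolds operator in characteristic $0$ and via lifting the invariant idempotent $(0,1)\in k[Z]$ by geometric reductivity in general) is essentially the standard proof behind that citation. One correction to your closing remark: \Cref{QEInvalg} is stated for arbitrary algebraically closed $k$ with finite $G$, so characteristic $p$ dividing $|G|$ is in scope and the Reynolds case alone does not cover the paper's uses; there, however, the elementary invariant $\prod_{g\in G} g\cdot a_1$, which still vanishes on $Z_1$ and equals $1$ on $Z_2$, suffices without Haboush.
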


\medskip

A second fundamental property of reductive groups acting regularly on $V$ is finite generation of $k[V]^G$.

\begin{theorem}\label{fin}\cites{GIT,sym,haboush}
Let $k$ be an algebraically closed field, $G$ a reductive linear algebraic group, and $(V,\pi)$ a regular representation. Then the invariant algebra $k[V]^G$ is finitely generated.
\end{theorem}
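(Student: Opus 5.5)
The last statement is \Cref{fin}: the invariant algebra $k[V]^G$ of a reductive group acting regularly is finitely generated. This is Hilbert's classical argument, combined with the existence of a Reynolds operator for reductive groups. I would proceed in three steps.

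\textbf{Step 1: the Reynolds operator.} First I establish that $k[V]$ decomposes $G$-equivariantly. The action on $k[V]$ preserves each homogeneous component $k[V]_d$, and each $k[V]_d$ is a finite-dimensional regular representation. Hence $k[V]$ is a locally finite rational $G$-module. From reductivity I want a $G$-equivariant projection
\[
R\colon k[V]\to k[V]^G
\]
that is $k[V]^G$-linear, i.e.\ $R(fh)=fR(h)$ for invariant $f$. In characteristic $0$ this follows from linear reductivity: every finite-dimensional rational representation is completely reducible, so each $k[V]_d$ splits canonically as $k[V]_d^G\oplus W_d$, with $W_d$ the sum of the nontrivial isotypic components, and $R$ is the projection onto the first summand. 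Multiplication by an invariant $f$ is $G$-equivariant, so it maps the trivial isotypic component to the trivial one and nontrivial components into the sum of nontrivial ones. This gives $k[V]^G$-linearity.

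\textbf{Step 2: Hilbert's argument.} Let $I\subset k[V]$ be the ideal generated by all homogeneous invariants of positive degree. By Hilbert's basis theorem, $I$ is generated by finitely many homogeneous invariants $f_1,\dots,f_s$. I then show by induction on degree that $k[V]^G=k[f_1,\dots,f_s]$. Take a homogeneous invariant $f$ of positive degree and write $f=\sum h_if_i$ with $h_i$ homogeneous of degree $\deg f-\deg f_i$. Applying $R$ gives
\[
f=\sum R(h_i)f_i .
\]
Each $R(h_i)$ is an invariant of strictly smaller degree, so by induction it lies in $k[f_1,\dots,f_s]$, and hence so does $f$.

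\textbf{Main obstacle: positive characteristic.} There reductive groups need not be linearly reductive, so the Reynolds operator from Step 1 does not exist, and this is the genuinely hard part. Here I would invoke Haboush's theorem, that reductive groups are geometrically reductive: for every nonzero invariant vector $v$ in a quotient of a representation, some homogeneous power of a lift is invariant. I would then run Nagata's argument. It shows that $k[V]$ is integral over the subalgebra generated by suitable invariants. Then finite generation of $k[V]^G$ follows by an inductive/Noetherian argument on the graded ideal $I$, with surjectivity of $k[V]\to k[V]/J$ on invariants replaced by integrality of invariants.

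Since the paper cites \cite{GIT}, \cite{sym} and \cite{haboush}, I would present Steps 1 and 2 in full for characteristic $0$, which is the case relevant to the applications over $\C$ and $k(\sqrt{-1})$ for real closed $k$. For positive characteristic I would defer to Haboush--Nagata.
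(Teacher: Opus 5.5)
The paper gives no proof of \Cref{fin}; it only quotes the result from the literature. Your characteristic-$0$ argument is the standard proof and is correct: a degreewise projection onto the trivial isotypic component, $k[V]^G$-linearity from the equivariance of multiplication by invariants, Hilbert's basis theorem, and induction on degree. Its one nontrivial input is that reductive groups are linearly reductive in characteristic $0$.

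Your sketch of the positive-characteristic route, however, contains a step that would fail. Nagata's argument does not show that $k[V]$ is integral over a subalgebra generated by invariants, and this is false in general. For $\mathbb{G}_m$ acting on $V\neq 0$ by scalars, $k[V]^G=k$, and $k[V]$ is not integral over $k$.

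What geometric reductivity actually gives is the following. For a $G$-stable ideal $J$ of a finitely generated $G$-algebra $A$, every element of $(A/J)^G$ has a positive power in the image of $A^G$. Hence $(A/J)^G$ is integral over that image. Consequently:
\begin{enumerate}
\item the Noetherian induction must run over all such quotients $A=k[V]/J$, not just over $k[V]$;
\item the Artin--Tate lemma is needed to pass finite generation from $(A/J)^G$ down to the image of $A^G$.
\end{enumerate}
Relatedly, geometric reductivity does not say that ``a power of a lift is invariant.'' It says that a power of the invariant element lifts to an invariant. Equivalently, for $0\neq v\in V^G$ there is a homogeneous invariant $F$ of positive degree with $F(v)\neq 0$.

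Your remark that characteristic $0$ is the relevant case for the paper is also inaccurate. \Cref{fin} is invoked only in \Cref{QEInvalg}, for a finite $G$ over an arbitrary algebraically closed field. When $\operatorname{char}k$ divides $|G|$, no Reynolds operator exists, so your Step 1 does not cover that use.

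In that finite case, though, your integrality claim is exactly right, and it gives an elementary proof in every characteristic (Noether). Each coordinate function $x_i$ is a root of $\prod_{g\in G}(T-g\cdot x_i)$. The coefficients of these polynomials generate a finitely generated subalgebra $B\subseteq k[V]^G$, over which $k[V]$ is a finite module. Since $B$ is Noetherian, $k[V]^G$ is a finite $B$-module and hence finitely generated.

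The real closed applications do not use \Cref{fin} at all. They rely on \Cref{hil} and the explicit generators from \cite{schwarzoct}.
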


\medskip

We now turn to compact real Lie groups acting linearly on real algebras. Every compact Lie group admits a faithful finite-dimensional real representation and can be treated as a compact linear group. However this generalization away from algebraic closed fields is somewhat more involved and in fact the real versions of \Cref{fin} and \Cref{orbit}, that we will show below, are simpler and historically older, dating back to Hilbert. In contrast to the results discussed above, we will state the results for $\R$ instead of general real closed fields, as this is how they can be found in the literature, and in fact may rely on completeness type arguments, not valid for general real closed fields.

\begin{theorem}\label{hil}\cite{gol}
Let $V$ be a finite-dimensional $\R$-vector space and $G$ a compact Lie group acting linearly on $V$. Then the invariant algebra $\R[V]^G$ is finitely generated.
\end{theorem}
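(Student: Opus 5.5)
The plan is to reduce to the reductive case, \Cref{fin}, by complexifying, and then descend back to $\R$. Alternatively, one can run Hilbert's original averaging argument directly over $\R$, using the Haar measure of the compact group $G$. I choose the averaging route, because it avoids identifying the Zariski closure and its real form.

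First, I fix a Haar probability measure $\mu$ on $G$ and define the Reynolds operator
\[
R:\R[V]\to\R[V]^G,\qquad R(f)(v)=\int_G f(\pi(g^{-1})v)\,d\mu(g).
\]
Since the action is linear, $G$ preserves each homogeneous component $\R[V]_d$, which is finite-dimensional. Hence $R(f)$ is again a polynomial of the same degree, as the integral of a continuous map into a finite-dimensional space. Invariance of $\mu$ gives $R(f)\in\R[V]^G$ and $R|_{\R[V]^G}=\mathrm{id}$. Moreover, $R$ is $\R[V]^G$-linear: $R(hf)=hR(f)$ for $h$ invariant.

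Next comes the standard Hilbert argument. Let $I\subseteq\R[V]$ be the ideal generated by all homogeneous invariants of positive degree. By the Hilbert basis theorem, $I$ is generated by finitely many homogeneous invariants $f_1,\dots,f_r$ of positive degree. I claim these generate $\R[V]^G$ as an $\R$-algebra, and prove it by induction on degree for homogeneous $h\in\R[V]^G$ of degree $d>0$. Since $h\in I$, we can write $h=\sum a_i f_i$ with $a_i$ homogeneous of degree $d-\deg f_i<d$. Applying $R$ gives $h=\sum R(a_i) f_i$. Each $R(a_i)$ is an invariant of smaller degree, so by induction it lies in $\R[f_1,\dots,f_r]$. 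Since $\R[V]^G$ is graded (the action preserves degrees), this proves the claim.

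The main obstacle is justifying the analytic input: the existence of the Haar probability measure on the compact Lie group, and the fact that integrating a polynomial-valued continuous function yields a polynomial. Both are standard. For the second, I choose a basis of $\R[V]_d$ and integrate coordinatewise; continuity of $g\mapsto g\cdot f$ follows from the continuity of the representation $G\to\mathrm{GL}(V)$. This is exactly where completeness of $\R$ enters, which matches the caveat in the text that the result is stated only over $\R$.
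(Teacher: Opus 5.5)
Your proposal is correct. It is the classical Hilbert--Weyl argument: a Haar-measure Reynolds operator that is $\R[V]^G$-linear and degree-preserving, combined with the Hilbert basis theorem for the ideal generated by positive-degree homogeneous invariants and induction on degree. The paper gives no proof of its own; it only cites the result and attributes it to Hilbert, and your argument is the standard proof of that cited result.
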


\medskip

The next theorem, in the form we use it, reduces to an application of the Stone-Weierstrass theorem.

\begin{theorem}\label{schwarz}\cites{bredon,lie}
Let $V$ be a finite-dimensional $\R$-vector space and $G$ a compact Lie group acting linearly on $V^m$. Let $p_1,\dots,p_n$ be generators of the invariant algebra $\R[V^m]^G$. Then the orbit space $V^m/G$ is locally compact and Hausdorff, and the, by means of $p_i$, induced map
\[
p:V^m/G\to\R^n
\]
is an embedding. In particular, $p$ separates the $G$-orbits on $V^m$.
\end{theorem}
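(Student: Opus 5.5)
We must prove \Cref{schwarz}: for a compact Lie group $G$ acting linearly on $W:=V^m$ with generators $p_1,\dots,p_n$ of $\R[W]^G$, the orbit space $W/G$ is locally compact Hausdorff and the induced map $p$ is an embedding. The plan is to first prove that invariant polynomials separate orbits, using Stone--Weierstrass, and then upgrade the resulting continuous injection to an embedding via properness. Throughout, fix a $G$-invariant inner product on $W$, obtained by averaging an arbitrary inner product with the Haar measure of $G$. Then $G$ acts by orthogonal maps, and the norm $\|\cdot\|^2$ is itself an invariant polynomial.

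\textbf{Topology of $W/G$.} Since $G$ is compact and acts continuously, every orbit $Gw$ is compact, and the quotient map $\rho:W\to W/G$ is open, because $\rho^{-1}(\rho(U))=\bigcup_g gU$. For Hausdorffness, take distinct orbits $Gv\neq Gw$. They are disjoint compact sets, so they have positive distance $\delta$. Because the norm is invariant, the $\delta/2$-neighbourhoods of the two orbits are disjoint, $G$-stable, and open, so their images separate $\rho(v)$ and $\rho(w)$. Local compactness follows because $\rho$ maps closed balls, which are compact, onto compact neighbourhoods.

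\textbf{Separation of orbits.} Let $Gv\neq Gw$. First choose a continuous function $\phi$ on $W$ with $\phi=0$ on $Gv$ and $\phi=1$ on $Gw$, for instance via Urysohn or via distance functions. Restrict $\phi$ to a closed ball $B$ containing both orbits, and approximate it uniformly on $B$ to within $1/4$ by a polynomial $q$, using Stone--Weierstrass. Next average $q$ over $G$:
\[
\bar q(x)=\int_G q(gx)\,dg .
\]
The averaged function $\bar q$ is again a polynomial, of degree at most $\deg q$, since for fixed $x$ the map $g\mapsto q(gx)$ ranges over a finite-dimensional space of polynomials and the integral is linear. It is also $G$-invariant. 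Because the ball $B$ is $G$-stable, $\bar q$ stays within $1/4$ of $\phi$ on both orbits. Hence $\bar q(v)\le 1/4<3/4\le\bar q(w)$. Finally, $\bar q$ is a polynomial in $p_1,\dots,p_n$, so some $p_i$ must differ at $v$ and $w$. This shows that $p$ is injective on $W/G$. It is continuous because $p\circ\rho$ is continuous and $\rho$ is a quotient map.

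\textbf{Embedding.} The main obstacle is showing that $p$ is a homeomorphism onto its image, not merely a continuous injection. I would prove that $p$ is proper, so that $p$ is closed; a closed continuous injection is then an embedding. Since $\|\cdot\|^2\in\R[W]^G$, we can write $\|w\|^2=F(p_1(w),\dots,p_n(w))$ for some polynomial $F$. Consequently the preimage under $p\circ\rho$ of a bounded set of values lies in a bounded region of $W$. The preimage of a compact set $K\subset\R^n$ under $p$ is therefore the image under $\rho$ of a closed, bounded, hence compact subset of $W$, and so it is compact. A proper continuous map into the locally compact Hausdorff space $\R^n$ is closed, which gives the embedding. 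The final claim, that $p$ separates the $G$-orbits on $V^m$, is exactly the injectivity established above.
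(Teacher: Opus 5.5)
Your proof is correct and follows the route the paper points to: the paper gives no proof of its own, only the citation and the remark that the result reduces to Stone--Weierstrass, and your argument (averaging a Stone--Weierstrass approximant over Haar measure to separate orbits, then getting the embedding from properness since $\|\cdot\|^2$ is a polynomial in $p_1,\dots,p_n$) is the standard proof found in the cited literature. One wording slip needs fixing: for fixed $x$, the map $g\mapsto q(gx)$ is a function of $g$, whereas what takes values in the finite-dimensional space of polynomials of degree $\le\deg q$ is $g\mapsto q\circ g$, so $\bar q=\int_G q\circ g\,dg$ lies in that space because evaluation at $x$ is linear and commutes with the integral.
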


\medskip

The definitions above are only a glimpse of the very rich and old field of classical and geometric invariant theory. We compare this to the notion of \textit{complete invariant} we have given in our previous paper, which is much more "hands on" and will therefore remain useful for our results but is also devoid of any theory.  

\begin{definition}
Given an equivalence relation $\sim$ on a set $X$, a function $f:X\to Y$ is called a \emph{complete invariant} if
\[
\forall a,b\in X:\quad a\sim b \iff f(a)=f(b).
\]
If $f$ and $Y$ decompose compatibly as direct products, one also speaks of a \emph{complete set of invariants}.
\end{definition}

\section{Results}

For sake of completeness, we first restate here the general method introduced in~\cite{constructive} for transferring quantifier elimination.

Let $\mathcal{L}$ and $\mathcal{G}$ be formal languages and $A$ and $B$ structures over $\mathcal{L}$ and $\mathcal{G}$, respectively. Assume that for every $m\in\N$ there exists a function
\[
f_m:B^m\to A^{d(m)}
\]
with the following properties:

\begin{enumerate}
\item[(i)] For any definable set $D\subseteq B^m$, the set $f_m(D)$ is definable in $A^{d(m)}$.
\item[(ii)] For any quantifier-free definable set $E\subseteq A^{d(m)}$, the set $f_m^{-1}(E)\subseteq B^m$ is quantifier-free definable.
\item[(iii)] For any $a\in A^{d(m)}$ and any definable set $D\subseteq B^m$, we either have $f_m^{-1}(a)\subseteq D$ or $f_m^{-1}(a)\cap D=\emptyset$.
\end{enumerate}

We call conditions (i) and (ii) \emph{constructive} if there is an algorithm transforming definitions of sets into definitions of their image/preimage, as required.

\medskip

Properties (i) and (ii) allow us to transfer definable sets back and forth via $f_m$. Property (iii) basically says that $f_m$ is "injective up to definability".

\begin{theorem}\label{theo}
Let $A,B$ and $(f_m)_{m\in\N}$ be as described above. Then, if $A$ admits quantifier elimination, so does $B$. Furthermore, if (i) and (ii) are constructive and there exists a constructive quantifier-elimination algorithm for $A$, then there is also a constructive algorithm for quantifier elimination in $B$.
\end{theorem}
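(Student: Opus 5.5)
The plan is to show directly that every definable set $D\subseteq B^m$ is quantifier-free definable; no induction on formula complexity is needed. Fix a formula $\phi(x_1,\dots,x_m)$ over $\mathcal{G}$ and put $D=\phi(B)$. By property (i), the image $f_m(D)\subseteq A^{d(m)}$ is definable in $A$. Since $A$ admits quantifier elimination, there is a quantifier-free formula $\psi$ with $\psi(A)=f_m(D)$. By property (ii), the preimage $E:=f_m^{-1}(\psi(A))$ is quantifier-free definable in $B^m$, say by $\chi$. So it only remains to show $E=D$.

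The inclusion $D\subseteq f_m^{-1}(f_m(D))=E$ holds for any map. For the reverse inclusion, let $b\in E$ and put $a:=f_m(b)\in f_m(D)$. Choose $d\in D$ with $f_m(d)=a$. Then $d\in f_m^{-1}(a)\cap D$, so this intersection is nonempty. Property (iii) now forces $f_m^{-1}(a)\subseteq D$, and in particular $b\in D$. Hence $D=E=\chi(B)$, and $\phi$ is equivalent in $B$ to the quantifier-free formula $\chi$. This is the key step: property (iii) says exactly that $f_m^{-1}(f_m(D))=D$ for definable $D$. Everything else is a direct chaining of the hypotheses.

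For the constructive part, the same chain is an algorithm. Given $\phi$, the constructive version of (i) produces a formula defining $f_m(\phi(B))$. The quantifier-elimination algorithm for $A$ turns it into the quantifier-free $\psi$. The constructive version of (ii) then outputs the quantifier-free $\chi$ defining $f_m^{-1}(\psi(A))$. Correctness follows from the argument above, and property (iii) needs no algorithmic content.

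The only point needing care is how the quantifier elimination is stated. The statement concerns the single structure $A$ (that is, $\phi(A)=\psi(A)$), not its theory, and this matches the paper's definition. The one place to be careful is that "definable" in (i) means definable by some formula. In the constructive reading, that formula has to come from the algorithm for (i), so that quantifier elimination in $A$ can be applied to it.
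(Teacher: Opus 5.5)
Your proof is correct and follows the paper's argument: (i) makes $f_m(D)$ definable, quantifier elimination in $A$ makes it quantifier-free definable, (ii) pulls it back to a quantifier-free definable subset of $B^m$, and (iii) gives $f_m^{-1}(f_m(D))=D$. You spell out this last identity in more detail than the paper does, and your constructive part is the same chaining of the algorithms.
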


\begin{proof}
Let $D\subseteq B^m$ be definable. By (i), $f_m(D)$ is definable in $A^{d(m)}$. Since $A$ admits quantifier elimination, $f_m(D)$ is quantifier-free definable and so by (ii), $f_m^{-1}(f_m(D))$ is also quantifier-free definable in $B^m$. By (iii), we have $f_m^{-1}(f_m(D))=D$, which proves the first statement. The statement about constructiveness also follows directly from this argument.
\end{proof}
\medskip
Let in the following $k$ always denote a real closed field.
Before applying \Cref{theo} to $\mathbb{H}(k)$, we need the following lemma, which provides us with a complete invariant for the diagonal action of $SO_3(k)\cong\mathrm{Aut}_k(\mathbb{H}(k))$ on $\mathbb{H}(k)^m$.

\begin{lemma}\label{quainv}
Define
\[
L_1:\mathbb{H}(k)^m\to k^{m}, 
\qquad 
\underline{v}\mapsto (\operatorname{Re}(v_i))_{i\le m},
\]
\[
L_2:\mathbb{H}(k)^m\to k^{m^2}, 
\qquad 
\underline{v}\mapsto 
\bigl(\operatorname{Re}((v_i-\overline{v_i})(v_j-\overline{v_j}))\bigr)_{i,j\leq m},
\]
\[
L_3:\mathbb{H}(k)^m\to k^{m^3}, 
\qquad 
\underline{v}\mapsto 
\bigl(\operatorname{Re}((v_i-\overline{v_i})(v_j-\overline{v_j})(v_k-\overline{v_k}))\bigr)_{i,j,k\leq m}.
\]

Then the orbits of the diagonal action of $SO_3(k)$ on $\mathbb{H}(k)^m$ are separated by $L_1,L_2,L_3$, i.e.
\[
L_1(\underline{v})=L_1(\underline{w})
\land
L_2(\underline{v})=L_2(\underline{w})
\land
L_3(\underline{v})=L_3(\underline{w})
\]
if and only if
\[
\exists R\in SO_3(k):\quad R\cdot v_i=w_i \text{ for all } i\le m.
\]
\end{lemma}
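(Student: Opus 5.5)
Write $v_i=a_i+x_i$ and $w_i=b_i+y_i$, with $a_i,b_i\in k$ the real parts and $x_i,y_i\in k^3$ the imaginary parts. I will translate the three invariants into Euclidean data on $k^3$ and then run the classical argument that the Gram matrix together with oriented volumes determines a tuple of vectors up to $SO_3$. The identification $SO_3(k)\cong\mathrm{Aut}_k(\mathbb{H}(k))$ is the one where $R$ fixes $\operatorname{Re}$ and acts on $\operatorname{Im}$ by the matrix $R$. Note $v_i-\overline{v_i}=2x_i$, read as a pure quaternion.

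\textbf{Step 1 (translating the invariants).} For pure quaternions $u,u',u''$ we have the standard identity $uu'=-\langle u,u'\rangle+u\times u'$. Two consequences follow.
\begin{itemize}
\item $\operatorname{Re}(uu')=-\langle u,u'\rangle$.
\item $\operatorname{Re}(uu'u'')=\operatorname{Re}((u\times u')u'')=-\langle u\times u',u''\rangle=-\det(u,u',u'')$.
\end{itemize}
Hence:
\begin{itemize}
\item $L_1$ records the real parts $a_i$.
\item $L_2$ records $-4$ times the Gram matrix $(\langle x_i,x_j\rangle)_{i,j}$.
\item $L_3$ records $-8$ times all determinants $\det(x_i,x_j,x_l)$.
\end{itemize}

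\textbf{Step 2 (easy direction).} If $w_i=R\cdot v_i$ for some $R\in SO_3(k)$, then $R$ fixes real parts. Since $R$ is orthogonal it preserves inner products, and since $\det R=1$ it preserves determinants. So all three invariants agree.

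\textbf{Step 3 (converse: building an orthogonal map).} Assume the invariants agree. Then $a_i=b_i$, so it suffices to find $R\in SO_3(k)$ with $Rx_i=y_i$ for all $i$.

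For $c\in k^m$, equality of the Gram matrices gives $\|\sum c_ix_i\|^2=\|\sum c_iy_i\|^2$. The form is positive definite over the ordered field $k$, so the $x_i$ and the $y_i$ satisfy exactly the same linear relations. Choose indices $I$ such that $(x_i)_{i\in I}$ is a basis of $U=\operatorname{span}(x_i)$. Then $(y_i)_{i\in I}$ is a basis of $U'=\operatorname{span}(y_i)$, and the linear map $\phi:U\to U'$ with $x_i\mapsto y_i$ is well defined, sends every $x_j$ to $y_j$, and is an isometry.

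Apply Gram--Schmidt to $U$ and to $U'$, completing each to an orthonormal basis of $k^3$. This needs only square roots of positive elements, which exist since $k$ is real closed. Sending one orthonormal basis to the other extends $\phi$ to some $R_0\in O_3(k)$.

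\textbf{Step 4 (fixing the determinant — the main obstacle).} It remains to arrange $\det R_0=1$.
\begin{itemize}
\item \emph{Case $\dim U=3$.} The extension is forced, and I will use $L_3$. Pick $i,j,l\in I$. Then $\det(y_i,y_j,y_l)=\det R_0\cdot\det(x_i,x_j,x_l)$. By equality of $L_3$ and since $\det(x_i,x_j,x_l)\neq0$, this gives $\det R_0=1$.
\item \emph{Case $\dim U\le 2$.} Here $L_3$ carries no information (all determinants vanish), but there is freedom in the extension. Choose a unit vector $n\perp U$ and let $S$ be the reflection $n\mapsto -n$, which fixes $U$ pointwise. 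If $\det R_0=-1$, replace $R_0$ by $R_0S$. This still sends each $x_i$ to $y_i$ and now has determinant $1$.
\end{itemize}
I expect the only delicate point to be this orientation bookkeeping, together with checking the sign conventions in Step 1 over a general real closed field rather than $\R$. The rest is linear algebra valid over any ordered field with square roots of positives.
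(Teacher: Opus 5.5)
Your proof is correct and follows the same route as the paper. You translate $L_2$ and $L_3$ into the Gram matrix and the triple products $\det(x_i,x_j,x_l)$ of the imaginary parts, obtain an orthogonal map from the Gram data, and fix the orientation using a nonvanishing triple product in the rank-$3$ case or a reflection in $U^\perp$ otherwise. You also supply the linear-algebra details the paper calls well known, and you work directly over the real closed field $k$, needing only square roots of positive elements, whereas the paper argues over $\R$ and then invokes Tarski transfer.
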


\begin{proof}
For simplicity we assume $k=\R$. Since $SO_3(k)$ is definable (even in $\mathcal{F}$ without $\leq$), the statement can be expressed in first-order logic, and thus the general case follows by Tarski's transfer principle.

The action of $SO_3(\R)$ is defined by
\[
R\cdot v
=
\operatorname{Re}(v)
+
\langle R\operatorname{Im}(v)^T,(\mathbf{i},\mathbf{j},\mathbf{k})^T\rangle.
\]
It is clear that $R\cdot\overline{v}=\overline{R\cdot v}$ and $\operatorname{Re}(R\cdot v)=\operatorname{Re}(v)$. This makes the “$\Leftarrow$” direction immediate.

For the “$\Rightarrow$” direction, note that $L_1(\underline{v})=L_1(\underline{w})$ gives the clearly necessary condition for the real parts to coincide. Thus the problem reduces to finding a rotation $R\in SO_3(\R)$ simultaneously transforming $\operatorname{Im}(v_i)$ to $\operatorname{Im}(w_i)$.

For $L_2$ and $L_3$ we observe that by elementary quaternion algebra
\[
\operatorname{Re}((v-\overline{v})(w-\overline{w}))
=
-4\langle\operatorname{Im}(v)^T,\operatorname{Im}(w)^T\rangle
\]
and
\[
\operatorname{Re}((v-\overline{v})(w-\overline{w})(z-\overline{z}))
=
-8\langle
\operatorname{Im}(v)^T,
\operatorname{Im}(w)^T\times\operatorname{Im}(z)^T
\rangle.
\]

It is well known that if $m$ points $\underline{p}\in\R^3$ have the same pairwise angles and lengths as $m$ other points $\underline{q}\in\R^3$, then there exists a transformation $R\in O_3(\R)$ with $Rp_i=q_i$ for all $i\le m$.

To ensure that $R$ can be chosen orientation-preserving (i.e.\ contained in $SO_3(\R)$), the condition
\[
\langle p_i,(p_j\times p_k)\rangle
=
\langle q_i,(q_j\times q_k)\rangle
\]
for any triple $p_i,p_j,p_k$ in general position is sufficient. In the case where all points lie in a proper subspace, $R$ can also trivially be chosen to be orientation preserving. This completes the proof.
\end{proof}

\begin{corollary}\label{quat}
The $(\mathcal{F},\leq,\bar \ )$-structure $\mathbb{H}(k)$ admits quantifier elimination.
\end{corollary}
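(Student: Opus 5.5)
We apply \Cref{theo} with $B=\mathbb{H}(k)$ as an $(\mathcal{F},\leq,\bar{\ })$-structure and $A=k$ as an $(\mathcal{F},\leq)$-structure, which admits quantifier elimination by Tarski--Seidenberg. For each $m$ we take
\[
f_m=(L_1,L_2,L_3):\mathbb{H}(k)^m\to k^{m+m^2+m^3},
\]
with $L_1,L_2,L_3$ from \Cref{quainv}. It remains to verify conditions (i)--(iii).

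For (iii), observe that every $R\in SO_3(k)\cong\mathrm{Aut}_k(\mathbb{H}(k))$ commutes with conjugation and fixes $k\cdot 1$ pointwise, so it preserves $\leq$. Hence $SO_3(k)\leq\mathrm{Aut}(\mathbb{H}(k))_{(\mathcal{F},\leq,\bar{\ })}$. By \Cref{quainv}, each fibre $f_m^{-1}(a)$ is either empty or a single diagonal $SO_3(k)$-orbit. By \Cref{type}, all elements of such an orbit have the same $\emptyset$-type, so every definable $D$ either contains the fibre or is disjoint from it.

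For (ii), the coordinates of $f_m$ are the real parts of terms in $\mathcal{F}\cup\{\bar{\ }\}$. Note that $\operatorname{Re}(t)=\tfrac12(t+\bar t)$, and $\tfrac12$ is expressible since $2$ is invertible: we use $2\cdot x=t+\bar t$. So each coordinate $f_m^{(j)}(\underline v)$ is given by a term $\tau_j(\underline v)$ with values in $k1$. Now let $E\subseteq k^{d}$ be quantifier-free definable, i.e.\ a Boolean combination of atoms $p(\underline y)=0$ and $p(\underline y)\ge 0$ with $p\in\mathbb{Z}[\underline y]$. Then $f_m^{-1}(E)$ is defined by substituting the terms $\tau_j$ for the $y_j$. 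Denominators $2^r$ are cleared by multiplying through, and $\ge 0$ is read via $\leq$ on $k1_{\mathbb{H}}$. The result is a Boolean combination of atomic $(\mathcal{F},\leq,\bar{\ })$-formulas in $\underline v$, because every term $\tau$ appearing takes values in $k1$. This is the reason $\leq$ and $\bar{\ }$ were added to the language.

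For (i), we interpret $\mathbb{H}(k)$ in $k$ via coordinates $x=a+b\mathbf i+c\mathbf j+d\mathbf k\mapsto(a,b,c,d)$. Multiplication and conjugation are polynomial in the coordinates, and $x\le y$ translates to $b_x=c_x=d_x=b_y=c_y=d_y=0\wedge a_x\le a_y$. By induction on formulas, each $(\mathcal{F},\leq,\bar{\ })$-formula $\phi(\underline v)$ becomes an $(\mathcal{F},\leq)$-formula $\phi^*$ over $k$ in $4m$ variables, with quantifiers over $\mathbb{H}$ becoming blocks of four quantifiers over $k$. Since $f_m$ is polynomial in these coordinates, $f_m(D)$ is defined by
\[
\exists \underline x\in k^{4m}\,\bigl(\phi^*(\underline x)\wedge \underline y=f_m(\underline x)\bigr),
\]
so it is definable in $k$. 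Both translations are explicit, so (i) and (ii) are constructive, and \Cref{theo} yields the corollary, in constructive form.

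I expect (ii) to be the main obstacle, specifically the care needed in realising $\operatorname{Re}$ and the order by quantifier-free formulas. Scalars must be handled in the algebra language without scalar multiplication symbols. The fix is to clear the denominators $2^r$ with integer multiples, and to check that every atom $\tau\le\sigma$ produced has both sides in $k1$, so that its meaning matches the order on $k$. Condition (iii) is immediate from \Cref{quainv} and \Cref{type}, and (i) is routine.
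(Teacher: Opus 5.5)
Your proposal is correct and follows the paper's proof step for step. It uses the same transfer function $f_m=L_1\oplus L_2\oplus L_3$ into $k^{m+m^2+m^3}$, gets (i) by writing everything in coordinates and projecting, gets (ii) by substituting the scalar-valued terms $\tfrac12(t+\bar t)$ into the atoms, and gets (iii) from \Cref{quainv} together with $SO_3(k)\leq\mathrm{Aut}(\mathbb{H}(k))_{(\mathcal{F},\leq,\bar{\ })}$ and \Cref{type}. Your explicit clearing of powers of $2$, and your check that every $\leq$-atom compares terms with values in $k1$, are a more careful version of the paper's remark that $\tfrac12$ may be treated like a constant symbol.
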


\begin{proof}
We apply \Cref{theo} to the $(\mathcal{F},\leq,\bar \ )$-structure $\mathbb{H}(k)$ and the $(\mathcal{F},\leq)$-structure $k$, which admits quantifier elimination.

For this define
$
d(m)=m+m^2+m^3$
and
\[
f_m:\mathbb{H}(k)^m\to k^{d(m)},
\qquad
\underline{v}\mapsto
L_1(\underline{v})\oplus L_2(\underline{v})\oplus L_3(\underline{v}).
\]
where we define $L_i$ as in \Cref{quainv}.
We have to verify conditions (i)–(iii).

\medskip

(i) is clear. Just write both the defining formula and the definition of $f_m$ (which as polynomial map $k^{4m} \to k^{d(m)}$ only has rational coefficients) down in the vector entries and use existential quantifiers to obtain the image.

\medskip

For (ii), suppose $\phi(x_1,\dots,x_{d(m)})$ is a quantifier-free $(\mathcal{F},\leq)$-formula defining the set $D\subset k^{d(m)}$. Since
\[
\operatorname{Re}(v)=\iota^{-1}\!\left(\frac{1}{2}(v+\overline{v})\right),
\]
each coordinate of $\iota^{(d(m))}(f_m(y_1,...,y_m))$ is a term in $(\mathcal{F},\bar \ )$ (we can clearly treat $\frac{1}{2}$ like a constant symbol). Substituting these into the atomic subformulas of $\phi$ yields a quantifier-free formula, which we write symbolically as
\[
\phi\bigl(\iota(f_m(y_1,\dots,y_m)_1),...,\iota(f_m(y_1,\dots,y_m)_{d(m)})\bigr)
\]
and which defines $f_m^{-1}(D)$ in $(\mathcal{F},\leq,\bar \ )$.

\medskip

For (iii), recall that $f_m$ is a complete invariant for the diagonal action of $SO_3(k)$ on $\mathbb{H}(k)^m$ by \Cref{quainv}. Since for any $R\in SO_3(k)$ we have $R\cdot\overline{v}=\overline{R\cdot v}$ and $\operatorname{Re}(R\cdot v)=\operatorname{Re}(v)$, it follows that
\[
SO_3(k)
=
\mathrm{Aut}_k(\mathbb{H}(k))
\leq
\mathrm{Aut}(\mathbb{H}(k))_{\mathcal{F}}=
\mathrm{Aut}(\mathbb{H}(k))_{(\mathcal{F},\leq,\bar \ )}.
\]
Hence by \Cref{type} the fibers of $f_m$ have constant $\emptyset$-type, in particular, they cannot be split by definable sets.
\end{proof}
\medskip

We can repeat the basic proof idea for the octonions $\mathbb{O}(k)$, almost the same. But first, we again need to determine a complete invariant, which turns out a little more complicated than in the quaternion case.

\begin{lemma}\label{ocinv}
Define
\[
L:\mathbb{O}(k)^m\to k^{|I(m)|},
\qquad
\underline{v}\mapsto
\bigl(\operatorname{Re}(\omega_i(v_1,\dots,v_m))\bigr)_{i\in I(m)},
\]
where $I(m)$ denotes the set of all possible (nonassociative) words in $m$ letters of length $\le 4$.

Then the orbits of the diagonal action of $\mathrm{Aut}_k(\mathbb{O}(k))$ on $\mathbb{O}(k)^m$ are separated by $L$, i.e.
\[
L(\underline{v})=L(\underline{w})
\iff
\exists R\in \mathrm{Aut}_k(\mathbb{O}(k)):\quad R\cdot v_i=w_i \text{ for all } i\le m.
\]
\end{lemma}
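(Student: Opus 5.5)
The plan follows the proof of \Cref{quainv}: first reduce to $k=\R$, then prove the easy direction, then construct the automorphism in the hard direction. For the reduction, I note that $\mathrm{Aut}_k(\mathbb{O}(k))$ is defined by polynomial equations with rational coefficients in the $64$ matrix entries. Both sides of the equivalence are therefore first-order $(\mathcal{F},\leq)$-statements over $k$, for each fixed $m$. By Tarski's transfer principle it suffices to treat $k=\R$, where $\mathrm{Aut}_\R(\mathbb{O}(\R))=G_2(\R)$ is compact.

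The direction ``$\Leftarrow$'' is immediate. Any $k$-algebra automorphism $R$ fixes $1$ and preserves products, so $R\,\omega(\underline v)=\omega(R\underline v)$ for every word $\omega$. Moreover $R$ preserves $\operatorname{Re}$: the imaginary octonions are exactly the $x\notin k1$ whose square lies in $k1$ with $x^2\leq 0$, together with $0$, so $R$ preserves $\operatorname{Im}$ and hence $\operatorname{Re}$.

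For ``$\Rightarrow$'' I would avoid \Cref{schwarz} and give a direct argument, since I do not know a cited generating set for $\R[\mathbb{O}^m]^{G_2}$. (An invariant-theoretic route would be: the invariants are generated by traces of products of degree $\le 3$ in the imaginary parts, as in Schwarz's first fundamental theorem for $G_2$. That generating set is contained in the span of the $\operatorname{Re}(\omega_i)$, and \Cref{schwarz} would then finish. I keep this only as a fallback.)

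The direct argument runs as follows.
\begin{enumerate}
\item From words of length $\le 2$ the data determine $\operatorname{Re}(v_i)$ and $\langle v_i,v_j\rangle=\operatorname{Re}(v_i\overline{v_j})$, using $\overline{v_j}=2\operatorname{Re}(v_j)-v_j$. Replace each $v_i$ by its imaginary part $x_i$; this changes every $\operatorname{Re}(\omega)$ by a computable amount.
\item Let $S$ be the subalgebra generated by the $x_i$. By Artin's theorem any two elements generate an associative subalgebra, and $S$ is $k$, $\C$, $\mathbb{H}$ or $\mathbb{O}$.
\item If $S\ne\mathbb{O}$, the $x_i$ lie in some $\mathbb{H}$, i.e.\ in the span of two imaginary generators. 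Choose a maximal independent subfamily among the $x_i$ and use the Gram data to map it to the corresponding $y_i$. If the $x_i$ span only a $\le 2$-dimensional imaginary space, $G_2$ acts transitively on orthonormal pairs, so an isometry matching the Gram matrices extends to an automorphism. In rank $3$ inside $\mathbb{H}$, the cubic invariant $\operatorname{Re}(x_ix_jx_l)$ fixes the orientation, exactly as in \Cref{quainv}.
\item If $S=\mathbb{O}$, pick $x_a,x_b$ and a product word $u$ in the $x_i$ such that $x_a,x_b,u$ form (after Gram--Schmidt, expressed through polynomials in the data) a basic triple $e_1,e_2,e_3$ with $e_3\perp \langle e_1,e_2,e_1e_2\rangle$. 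By the standard fact that $G_2$ acts simply transitively on basic triples, there is a unique $R$ sending the triple built from the $x$'s to the triple built from the $y$'s. Equality of the values $\operatorname{Re}(\omega)$ for words of length $\le 4$ gives equality of all pairwise inner products among $x_i$, $e_1e_2$, $e_1e_3$, $e_2e_3$, $(e_1e_2)e_3$, since each such inner product is $\operatorname{Re}$ of a word of length at most $4$. Because $R$ carries the basis $1,e_1,\dots,(e_1e_2)e_3$ to the corresponding basis for the $y$'s, the coordinates of each $x_i$ in the first basis equal those of $y_i$ in the second, so $Rx_i=y_i$.
\end{enumerate}

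The main obstacle is step (iv). I must check that the basic triple can always be chosen from words whose products with the $x_i$ stay within length $4$, and that the triple is built by a rule (first lexicographically admissible choice) depending only on the data $L$, so that the same construction applies to the $y$'s. Degenerate subcases where some Gram--Schmidt normalisation fails also need care. If length $4$ turns out insufficient in some configuration, I would fall back on the first fundamental theorem route via \Cref{schwarz} described above.
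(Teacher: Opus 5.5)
Your direct argument is correct in substance and follows a genuinely different route from the paper's.

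\emph{What the paper does.} It never computes with the octonion multiplication. It imports Schwarz's first fundamental theorem: the $\operatorname{Re}(\omega)$ with $|\omega|\le 4$ generate $\C[\mathbb{O}(\C)^m]^{G_2(\C)}$. It then uses Zariski density of $G_2(\R)$ in $G_2(\C)$ to conclude that $L(\underline v)=L(\underline w)$ forces equality of all real $G_2(\R)$-invariants. Orbit separation for compact groups (\Cref{schwarz}) then produces $R$, and Tarski transfer finishes.

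\emph{What your route does.} Your basic-triple construction builds $R$ explicitly. It only needs that $L$ separates orbits, not that it generates the invariant ring. It uses nothing beyond Gram--Schmidt (square roots of positive elements) and simple transitivity of $\mathrm{Aut}_k(\mathbb{O}(k))$ on basic triples, so it works verbatim over any real closed $k$. Your preliminary reduction to $\R$ and the compactness of $G_2(\R)$ are therefore never used. In exchange, the paper's route yields the stronger generation statement, which is the form that generalises to \Cref{QEInv}.

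\emph{Closing step (iv).} The step you flagged closes without any product word $u$.
Let $(a,b)$ be the first pair with nonsingular Gram block, $e_1,e_2$ its orthonormalisation, and $Q=\mathrm{span}(1,e_1,e_2,e_1e_2)$. Since $e_1e_2$ is a data-determined combination of $x_ax_b$ and $1$, the squared length of the component of $x_i$ orthogonal to $Q$ is determined by $\langle x_i,x_j\rangle$ and $\langle x_i,x_ax_b\rangle=\operatorname{Re}\bigl(x_i(x_bx_a)\bigr)$.

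If this vanishes for all $i$, you are in case (iii). There the coordinate along $e_1e_2$ already encodes orientation, so no separate rank discussion is needed.

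Otherwise let $c$ be the first index where it does not vanish. Then $x_c\notin Q$, and the normalised component $e_3$ of $x_c$ orthogonal to $Q$ completes a basic triple. These choices depend only on the data, so the $x$'s and $y$'s fall into the same case.

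Write $e_3=\alpha(x_c-q)$ with $q\in Q$, and use that $Q$ is closed under multiplication. Then every coordinate $\langle x_i,\beta\rangle$, $\beta\in\{1,e_1,\dots,(e_1e_2)e_3\}$, is a data-determined combination of $\langle x_i,x_j\rangle$, $\langle x_i,x_ax_b\rangle$, $\langle x_i,x_ax_c\rangle$, $\langle x_i,x_bx_c\rangle$ and $\langle x_i,(x_ax_b)x_c\rangle=-\operatorname{Re}\bigl(x_i(x_c(x_bx_a))\bigr)$. The last is the only term of length exactly $4$. You need only these coordinates, not the inner products among basis vectors you listed. Those are fixed by orthonormality and would involve longer words.

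\emph{Your fallback is wrong as stated.} Schwarz's generators have degrees $2,3,4$, and degree $\le 3$ does not separate orbits. For a basic triple, $(e_1,e_2,e_3,(e_1e_2)e_3)$ and $(e_1,e_2,e_3,-(e_1e_2)e_3)$ have identical Gram and cubic data. They nevertheless lie in different orbits, because an automorphism fixing a basic triple is the identity. This is exactly why $I(m)$ needs words of length $4$, and why your step (iv) genuinely uses them.
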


\begin{proof}
Again we first assume $k=\R$.

We have
\[
\mathrm{Aut}_{\R}(\mathbb{O}(\R))=G_2(\R),
\]
which is a real compact Lie group and can be defined as the stabilizer of the action $SO_7(\R)\curvearrowright \mathbb{O}(\R)$ with respect to the $3$-form $
\phi
=
e^{123}+e^{145}+e^{167}+e^{246}
-e^{257}-e^{347}-e^{356},
$ where $e^{ijk}$ denotes the exterior product of the corresponding dual basis vectors of $\R^7$.

Its Lie group complexification is the connected and reductive linear algebraic group
\[
G_2(\C)=\mathrm{Aut}_{\C}(\mathbb{O}(\C)).
\]
$G_2(\R)$ is a Zariski dense $\mathbb{R}$-form in $G_2(\C)$ and it naturally acts on $\mathbb{O}(\C)$ by $g\otimes \mathrm{id}_{\C}$ (see \cite{sym} for density of real forms or also~\cites{borel,grot} for more general statements regarding unirationality).

Now it has been shown in \cite{schwarzoct} (see also section 2.4. in \cite{alginv} for the explicit statement as is used here) that $
\C[\mathbb{O}(\C)^m]^{G_2(\C)}
$ is generated by the forms
\[
\operatorname{Re}(\omega(x_1,\dots,x_m)),
\]
where $\omega$ ranges over all nonassociative words of length $\le 4$. Note that all those  forms, written as homogeneous polynomials in the vector entries, have real valued coefficients.

Again the “$\Leftarrow$” direction is clear, since $G_2(\R)\curvearrowright \mathbb{O}(\R)^m$ fixes the real parts.

For the “$\Rightarrow$” direction, assume $L(\underline{v})=L(\underline{w})$. Passing to the complexification $\mathbb{O}(\C)^m=(\mathbb{O}(\R)\otimes_\R \C)^m$, it then follows from \cite{schwarzoct} that
\[
f(v_1\otimes 1_\C,\dots,v_m\otimes 1_\C)
=
f(w_1\otimes 1_\C,\dots,w_m\otimes 1_\C)
\quad
\text{for all } f\in\C[\mathbb{O}(\C)^m]^{G_2(\C)}.
\]

But by Zariski density, every $G_2(\R)$-invariant function in $\C[\mathbb{O}(\C)^m]$ is also $G_2(\C)$-invariant (the evaluation function $F_v(g):=f(g\cdot v)$ for arbitrary fixed $v\in \mathbb{O}(\C)^m$ and $f\in\C[\mathbb{O}(\C)^m]^{G_2(\R)}$ is clearly regular) and we get
\[
f(v_1\otimes 1_\C,\dots,v_m\otimes 1_\C)
=
f(w_1\otimes 1_\C,\dots,w_m\otimes 1_\C)
\quad
\text{for all } f\in\R[\mathbb{O}(\C)^m]^{G_2(\R)}\otimes_\R 1_\C \subset \C[\mathbb{O}(\C)^m]^{G_2(\R)}
\]

So by restricting back to the real subspace $\mathbb{O}(\R)\otimes 1_\C$, we can use compactness of  $G_2(\R)$ to apply \Cref{schwarz}:
\[
\exists R\in G_2(\R):\quad (R\otimes \mathrm{id}_\C)\cdot (v_i \otimes 1_\C)=w_i\otimes 1_\C \text{ for all } i\le m.
\]
and the desired result follows.\\
\\
Again the result for general real closed fields follows by Tarski's transfer principle as $G_2(k)$ is a definable subset of $k^{49}$, as can be seen for example by its definition as stabilizer subgroup.
\end{proof}

\begin{corollary}\label{Octonions}
The $(\mathcal{F},\leq,\bar \ )$-structure $\mathbb{O}(k)$ admits quantifier elimination.
\end{corollary}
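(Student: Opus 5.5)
We apply \Cref{theo} exactly as in the proof of \Cref{quat}, with $B=\mathbb{O}(k)$ as an $(\mathcal{F},\leq,\bar{\ })$-structure and $A=k$ as an $(\mathcal{F},\leq)$-structure, which admits quantifier elimination. We take $d(m)=|I(m)|$ and $f_m:=L$ from \Cref{ocinv}. We then verify conditions (i)–(iii) in turn.

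For (i), fix a basis $1,\mathbf{i},\mathbf{j},\mathbf{k},\boldsymbol{\ell},\mathbf{i}\boldsymbol{\ell},\mathbf{j}\boldsymbol{\ell},\mathbf{k}\boldsymbol{\ell}$ of $\mathbb{O}(k)$ over $k$. The structure constants of multiplication are in $\{0,\pm1\}$, and conjugation is a sign change on coordinates. Hence every $(\mathcal{F},\leq,\bar{\ })$-formula about $\mathbb{O}(k)^m$ translates into an $(\mathcal{F},\leq)$-formula about $k^{8m}$, by replacing each variable with eight scalar variables. For the order, $x\leq y$ becomes the statement that all imaginary coordinates of $x$ and $y$ vanish and the real parts satisfy the order. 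Each coordinate of $L$ is a polynomial with rational coefficients in these $8m$ variables. The image $f_m(D)$ is therefore defined by $\exists \underline{x}\in k^{8m}\bigl(\psi(\underline{x})\land \bigwedge_i z_i=L_i(\underline{x})\bigr)$, which is a definable set in $k$.

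For (ii), we use
\[
\operatorname{Re}(v)=\iota^{-1}\!\left(\tfrac12(v+\overline{v})\right),
\]
which holds because $\overline{a+b\boldsymbol{\ell}}=\overline a-b\boldsymbol{\ell}$, so $v+\overline v=2\operatorname{Re}(v)1$. Each word $\omega_i(y_1,\dots,y_m)$ is an $\mathcal{F}$-term with a fixed bracketing. Hence $\iota(\operatorname{Re}(\omega_i(\underline y)))=\tfrac12(\omega_i+\overline{\omega_i})$ is an $(\mathcal{F},\bar{\ })$-term, where $\tfrac12$ is treated as the term defined by $2x=1$, as in \Cref{quat}. Substituting these terms into the atoms of a quantifier-free $(\mathcal{F},\leq)$-formula $\phi$ defining $E$ gives a quantifier-free formula defining $f_m^{-1}(E)$. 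The atoms are equalities and inequalities of terms in $k1_{\mathbb{O}}$, and on such elements $\leq$ is interpreted consistently with the order on $k$.

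For (iii), \Cref{ocinv} says that the fibers of $f_m$ are exactly the orbits of the diagonal action of $\mathrm{Aut}_k(\mathbb{O}(k))$. It then remains to show $\mathrm{Aut}_k(\mathbb{O}(k))\leq \mathrm{Aut}(\mathbb{O}(k))_{(\mathcal{F},\leq,\bar{\ })}$; once this holds, \Cref{type} shows that every fiber lies in a single $\emptyset$-type, which gives (iii). Ring automorphisms are clear, and a $k$-linear automorphism fixes $k1$ pointwise, so it preserves $\leq$. The one point needing an argument, and the main obstacle, is that every $g\in\mathrm{Aut}_k(\mathbb{O}(k))$ commutes with conjugation. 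The plan is to characterize the imaginary part intrinsically. Every $x$ satisfies $x^2-2\operatorname{Re}(x)x+N(x)=0$, and $\operatorname{Im}$-elements are exactly those $x$ with $x^2\in k1$ and $x\notin k1\setminus\{0\}$. Indeed, $x=a+u$ with $u$ imaginary has $x^2=a^2-N(u)+2au$, which lies in $k1$ iff $a=0$ or $u=0$. This set is preserved by $g$, so $g$ preserves the decomposition $k1\oplus\operatorname{Im}$. Since $\overline{a+u}=a-u$, it follows that $g\overline{x}=\overline{gx}$. Over $\R$ this is also immediate from the description of $G_2(\R)$ as a stabilizer inside $SO_7(\R)$ acting on the imaginary part, but the intrinsic argument works over any real closed $k$ directly.
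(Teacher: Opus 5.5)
Your proposal is correct and follows the paper's proof. Like the paper, you apply \Cref{theo} with $f_m=L$ from \Cref{ocinv}, use the rational coefficients of $L$ for (i), $\operatorname{Re}(v)=\iota^{-1}\bigl(\frac12(v+\overline{v})\bigr)$ for (ii), and $\mathrm{Aut}_k(\mathbb{O}(k))\leq\mathrm{Aut}(\mathbb{O}(k))_{(\mathcal{F},\leq,\bar{\ })}$ with \Cref{type} for (iii); the only addition is your proof, valid over any real closed $k$, that $k$-algebra automorphisms commute with conjugation (imaginary elements are exactly the $x\notin k1\setminus\{0\}$ with $x^2\in k1$), a step the paper simply asserts.
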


\begin{proof}
We proceed exactly as in the proof of \Cref{quat}, mutatis mutandis, using
\[
G_2(k)=\mathrm{Aut}_k(\mathbb{O}(k))
\leq
\mathrm{Aut}(\mathbb{O}(k))_{(\mathcal{F},\leq,\bar \ )},
\]
the fact that $L$ from \Cref{ocinv} seen as polynomial map $k^{8m}\to k^{|I(m)|}$ only has rational coefficients,
and finally again using the function
\[
\operatorname{Re}(v)
=
\iota^{-1}\!\left(\frac{1}{2}(v+\overline{v})\right).
\]

\end{proof}

\begin{remark}
For both quaternions and octonions the symbol $\bar \ $ was needed to be able to express the invariant in our extended language. Note that conjugation, the scalar-part map $x\mapsto\operatorname{Re}(x)1$, the imaginary-part map $x\mapsto x-\operatorname{Re}(x)1$, and $N$ are quantifier-free interdefinable, so the precise choice of the additional symbol can be handled flexibly.
On the other hand it was essential for our proof to extend $\mathcal{F}$ by $\leq$ and interpret it as the partial order on the center. This had to be done to ensure condition (ii) of \Cref{theo} and indeed, the symbol $\bar \ $ allows for defining a copy of $k$ in the $(\mathcal{F},\bar \ )$-structure $\mathbb{H}(k)$ or $\mathbb{O}(k)$ without quantifiers. Thus formulas like $$\exists y :\text{Re}(x)=\text{Re}(y)\text{Re}(y)$$ suggest the necessity for the addition of the order symbol $\leq$. \\

\end{remark}
\medskip

For the special cases of $\mathbb{H}(k)$ and $\mathbb{O}(k)$, we explicitly determined complete invariants in order to state the result for a concrete language extension of $\mathcal{F}$ and also to preserve constructiveness. If we give up on these two features, we can use the general invariant theory developed above to obtain more general statements.

\begin{proposition}\label{QEInv}
Let $k$ be a real closed field in the language $(\mathcal{F},\leq)$ and $\A$ an $n$-dimensional $k$-algebra. Assume that
\[
G\leq\mathrm{Aut}_k(\A)
\]
is a $(\mathcal{F},\leq)$-definably compact subset of $k^{n^2}$ with respect to the order topology. Let $S_m$ be a finite generating set of $k[\A^m]^G$ for each $m\in \N$. Choose $(\mathcal{F},\leq,E)$ to be an extended language such that the interpretation of each $e\in E$ for $\A$ lies in $\iota(k[\A^{\text{arity}(e)}]^G)$ and for all $s\in S_m$, $\iota(s)$ corresponds to the evaluation of a $(\mathcal{F},\leq,E)$-term. Assume furthermore that, after identifying $\A$ with $k^n$ via a $k$-basis beginning with $1_\A$, the expanded $(\mathcal{F},\leq,E)$-structure $\A$ is $\emptyset$-definable in the $(\mathcal{F},\leq)$-structure $k$. Then the $(\mathcal{F},\leq,E)$-structure $\A$ admits quantifier elimination.
\end{proposition}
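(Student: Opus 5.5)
We will apply \Cref{theo} with $B=\A$, viewed as an $(\mathcal{F},\leq,E)$-structure, and $A=k$, viewed as an $(\mathcal{F},\leq)$-structure. Writing $S_m=\{s_1,\dots,s_{d(m)}\}$, we take
\[
f_m:\A^m\to k^{d(m)},\qquad \underline{v}\mapsto (s_1(\underline{v}),\dots,s_{d(m)}(\underline{v})).
\]
The argument then mirrors the proofs of \Cref{quat} and \Cref{Octonions}: the general invariant theory replaces the explicit complete invariants of \Cref{quainv} and \Cref{ocinv}.

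Conditions (i) and (ii) are formal. For (i), we use that the $(\mathcal{F},\leq,E)$-structure $\A$ is $\emptyset$-definable in $k$. Hence every definable $D\subseteq\A^m$ corresponds to an $(\mathcal{F},\leq)$-definable subset of $k^{nm}$. Each $s_i$ is a polynomial map $k^{nm}\to k$, and it is definable because $\iota(s_i)$ is the evaluation of an $(\mathcal{F},\leq,E)$-term, which is interpretable in $k$. The image $f_m(D)$ is then defined by an existential formula over the coordinates. For (ii), given a quantifier-free $(\mathcal{F},\leq)$-formula $\phi(x_1,\dots,x_{d(m)})$, we substitute for each $x_i$ the $(\mathcal{F},\leq,E)$-term $t_i(y_1,\dots,y_m)$ whose value is $\iota(s_i(\underline{y}))$. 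Since $\iota$ is a ring embedding onto $k1_\A$ and $\leq$ on $\A$ is by definition the order on $k1_\A$, atomic formulas in $k$ translate to atomic formulas in $\A$. The resulting quantifier-free formula therefore defines $f_m^{-1}(D)$.

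For (iii), we first check that $G\leq\mathrm{Aut}(\A)_{(\mathcal{F},\leq,E)}$. Elements of $G$ are $k$-algebra automorphisms, so they fix $k1_\A$ pointwise and preserve $\leq$. Each $e\in E$ is interpreted as $\iota$ of a $G$-invariant function, so it is also preserved: $e(g\underline{v})=e(\underline{v})$ and $g$ fixes this value. By \Cref{type}, it then suffices to show that the fibres of $f_m$ are exactly the diagonal $G$-orbits on $\A^m$. Every fibre is a union of orbits because the $s_i$ are invariant. For the converse, we first handle the case $k=\R$. There $G$ is compact in the Euclidean topology, since it is definably compact, i.e.\ closed and bounded. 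It is also a closed subgroup of $\mathrm{GL}_n(\R)$, hence a compact Lie group acting linearly on $\A^m$. Since $S_m$ generates $\R[\A^m]^G$, \Cref{schwarz} gives that $S_m$ separates $G$-orbits.

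For general real closed $k$, we transfer. The statement
\[
\forall \underline{v},\underline{w}\;\bigl(f_m(\underline{v})=f_m(\underline{w})\rightarrow \exists g\in G\ \forall i\ g v_i=w_i\bigr)
\]
is first-order over $k$ once $G$ and the $s_i$ are written as $(\mathcal{F},\leq)$-formulas. This is the step I expect to be the \textbf{main obstacle}. Both $G$ and $S_m$ are defined over $k$, possibly with parameters not in $\R$, so Tarski transfer to $\R$ does not directly apply. My approach is to quantify over the parameters. Consider the formula expressing: "$G_c$ is a definably compact subgroup of $\mathrm{Aut}_k(\A_c)$, the polynomials with coefficients $c$ generate the invariant algebra in degrees up to some bound, and they separate orbits". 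Here all defining data, namely the structure constants, the parameters of $G$, and the coefficients of the $s_i$, are free parameters $c$. Generation must be phrased in bounded degree; I would bound the needed degree using a degree bound coming from \Cref{hil} and \Cref{fin}, or else argue only with separation of orbits, which is all that is needed. The sentence "$\forall c$ (hypotheses $\rightarrow$ separation)" holds in $\R$ by the previous paragraph and hence in every real closed $k$. Should this degree bound be unavailable, I would fall back to working directly over $k$: I would adapt the Stone--Weierstrass argument of \Cref{schwarz} via the o-minimal theory of definably compact groups, in particular the existence of an invariant definable inner product.

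Once (iii) holds, \Cref{theo} together with quantifier elimination for the $(\mathcal{F},\leq)$-structure $k$ yields quantifier elimination for the $(\mathcal{F},\leq,E)$-structure $\A$.
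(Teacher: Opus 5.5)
Your route is the paper's: the same $f_m$, the same formal checks of (i) and (ii), and for (iii) the same combination of $G\leq\mathrm{Aut}(\A)_{(\mathcal{F},\leq,E)}$, \Cref{type}, and \Cref{schwarz} over $\R$, followed by a transfer to $k$. The paper handles that transfer in one sentence (``by definability of $G$ \dots\ Tarski's transfer principle''). You are right that something is hidden there, because the generation hypothesis on $S_m$ is given over $k$ and is not first-order.

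\textbf{The gap.} Your repair does not close it. You quantify over all defining data $c$, so your sentence $\forall c\,(\dots)$ needs a degree bound that is \emph{uniform} in $c$. Nothing cited supplies one: \Cref{hil} and \Cref{fin} are qualitative finite-generation results, giving at best a bound for each single group. Your last fallback also fails as stated. A Stone--Weierstrass and averaging argument needs completeness and real-valued integration, which a non-archimedean $k$ lacks. So as written, (iii) is not established for $k\neq\R$.

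\textbf{The missing observation.} There are no parameters to quantify over. In this paper ``definable'' means $\emptyset$-definable (see the preliminaries). So $G$ is given by a parameter-free $(\mathcal{F},\leq)$-formula. The same formula defines a group $G(\R)$ acting on the $\R$-algebra $\A_{\R}$ with the same (real algebraic) structure constants. Closedness, boundedness and the group axioms transfer, so $G(\R)$ is compact.

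\textbf{The repair.} Apply \Cref{hil} to this one group to get a $D$ such that $\R[\A_{\R}^m]^{G(\R)}$ is generated in degrees $\le D$. By \Cref{schwarz}, the parameter-free sentence
\[
\forall \underline{a},\underline{b}\ \bigl(\text{all $G$-invariant polynomials of degree}\le D\text{ agree on }\underline{a},\underline{b}\ \rightarrow\ \exists g\in G:\ g\underline{a}=\underline{b}\bigr)
\]
then holds in $\R$. It is first-order because invariance of a coefficient vector is expressed by quantifying over $G$. Hence it holds in $k$.

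Over $k$ you never need to transfer generation. If $f_m(\underline{a})=f_m(\underline{b})$, then every element of $k[S_m]=k[\A^m]^G$ agrees on $\underline{a},\underline{b}$, so the sentence applies. This is your ``separation only'' fallback, made to work by dropping the quantification over $c$. It is also the cleanest rigorous reading of the paper's one-line transfer.

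If $G$ were allowed nonstandard parameters, your uniformity concern would be genuine, and neither your sketch nor the paper covers that case.
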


\begin{proof}
Define
\[
f_m:\A^m\to k^{|S_m|},
\qquad
f_m(\underline{a})_i=s_i(\underline{a}).
\]

We want to apply \Cref{theo}. Since $k$ is real closed, it admits quantifier elimination in $(\mathcal{F},\leq)$, so it only remains to verify (i)–(iii).

\medskip

Condition (i) follows from the assumed parameter-free definability of the expanded structure in $k$. Indeed, every formula defining $D\subseteq\A^m$ can be translated coordinatewise into a formula over $k$. Moreover, since the scalar lifts of the components of $f_m$ are terms in the expanded language, the graph of $f_m$ is definable in $k$. Hence $f_m(D)$ is definable by existentially quantifying over the coordinates of the elements of $D$.

\medskip

(ii) follows by assumption on the extension $E$. If $\phi(x_1,\dots,x_{|S_m|})$ is a quantifier-free $(\mathcal{F},\leq)$-formula defining $D\subset k^{|S_m|}$, then
\[
\phi\bigl(\iota(s_1(y_1,\dots,y_m)),\dots,\iota(s_{|S_m|}(y_1,\dots,y_m))\bigr)
\]
is a quantifier-free $(\mathcal{F},\leq,E)$-formula defining $f_m^{-1}(D)$.

\medskip

For (iii), first assume $k=\R$. By \cite{comp}, in $o$-minimal structures definable compact sets can equivalently be defined as definable, closed and bounded sets. These are first-order definable properties in $(\mathcal{F},\leq)$ and therefore can be transferred to classical compactness on $\R$ by Heine-Borel.

Then, since $G$ is a compact subgroup of $\mathrm{Aut}_k(\A)\leq\mathrm{GL}_k(\A)$, it is a Lie group. Thus by \Cref{schwarz} $k[\A^m]^G$ separates the orbits of the diagonal action $G\curvearrowright \A^m$. Since $(s_i)$ generate $k[\A^m]^G$ we get the following relation:$$f_m(\underline{a})=f_m(\underline{b})\Leftrightarrow i(\underline{a})=i(\underline{b}) \forall i\in k[\A^m]^G\Leftrightarrow \exists g\in G:g\underline{a}=\underline{b}$$
Since the functions interpreted by $E$ are $G$-invariant and scalar-valued, every $g\in G$ is an automorphism of the expanded structure, that is,
\[
G\leq\mathrm{Aut}(\A)_{(\mathcal{F},\leq,E)}.
\]
Hence \Cref{type} implies that $\underline{a}$ and $\underline{b}$ have the same $\emptyset$-type in $(\mathcal{F},\leq,E)$. By definability of $G$ the statement for general real closed $k$ follows by Tarski's transfer principle, which finishes the proof of $(iii)$.
\end{proof}

\begin{remark}
The conditions for the language extension $E$ in \Cref{QEInv} seem somewhat far fetched. However, note that so called \textit{first fundamental theorems} (see \cite{sym}) give explicit descriptions of generators for many invariant algebras of $k$-algebras, and their corresponding automorphism groups (or subgroups thereof) by means of \textit{tensor forms}. These are usually stated for the algebraically closed case, however, it should be possible to often use Zariski denseness arguments, as we did in the proof of \Cref{ocinv}, to infer back to the real closed case.
\end{remark}

\medskip

What about algebras $\A$ over an algebraically closed field (in the following always denoted $k$)? Compactness of the acting group was essential in the real closed case. Comparing with \Cref{orbit}, we see that for a general reductive linear algebraic group $G$, $k[V]^G$ separates only orbit closures, which need not separate $\emptyset$-types. Over $k=\C$, a natural remedy would again be to require $G$ to be compact in the Euclidean topology. However, a compact complex affine algebraic group must already be finite, restricting us quite a bit.

\begin{proposition}\label{QEInvalg}
Let $k$ be an algebraically closed field in the language $\mathcal{F}$ and $\A$ an $n$-dimensional $k$-algebra. Assume $G\leq\mathrm{Aut}_k(\A)$ is a finite linear algebraic group. Let $S_m$ be a finite generating set of $k[\A^m]^G$ for each $m\in \N$, and let $(\mathcal{F},E)$ be an extended language such that the interpretation of each $e \in E$ in $\A$ lies in $\iota(k[\A^{\text{arity}(e)}]^G)$ and for all $s\in S_m$, $\iota(s)$ corresponds to evaluation of a $(\mathcal{F},E)$-term. Assume furthermore that, after identifying $\A$ with $k^n$ via a $k$-basis beginning with $1_\A$, the expanded $(\mathcal{F},E)$-structure $\A$ is $\emptyset$-definable in the $\mathcal{F}$-structure $k$. Then the $(\mathcal{F},E)$-structure $\A$ admits quantifier elimination.
\end{proposition}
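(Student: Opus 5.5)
The argument follows the proof of \Cref{QEInv}: apply \Cref{theo} to the $(\mathcal{F},E)$-structure $\A$ and the $\mathcal{F}$-structure $k$, which admits quantifier elimination because $k$ is algebraically closed. For each $m$ write $S_m=\{s_1,\dots,s_{|S_m|}\}$ and set
\[
f_m:\A^m\to k^{|S_m|},\qquad f_m(\underline{a})_i=s_i(\underline{a}).
\]
It remains to check conditions (i)–(iii) of \Cref{theo}.

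Conditions (i) and (ii) go through verbatim as in \Cref{QEInv}. For (i), the expanded structure $\A$ is $\emptyset$-definable in $k$, so a formula defining $D\subseteq\A^m$ translates coordinatewise into an $\mathcal{F}$-formula over $k^{nm}$. Each $\iota(s_i)$ is an $(\mathcal{F},E)$-term, so the graph of $f_m$ is definable in $k$. Existentially quantifying over the $nm$ coordinates then defines $f_m(D)$. For (ii), let $\phi$ be a quantifier-free $\mathcal{F}$-formula defining $D\subseteq k^{|S_m|}$. Substituting the terms $\iota(s_i(y_1,\dots,y_m))$ for the variables of $\phi$ gives a quantifier-free $(\mathcal{F},E)$-formula, and it defines $f_m^{-1}(D)$ because $\iota$ is a field embedding onto $k1_\A$ and commutes with $+,-,\times,0,1$.

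The substantive step is (iii), and here finiteness of $G$ replaces compactness. A finite group is reductive, and its orbits are finite, hence Zariski closed. By \Cref{orbit}, the invariant algebra $k[\A^m]^G$ therefore separates $G$-orbits on $\A^m$, not merely their closures. This can also be checked directly. Take $\underline{b}\notin G\underline{a}$. Choose a polynomial $h$ with $h(\underline{b})=1$ that vanishes on the finite set $G\underline{a}$. Then $F=\prod_{g\in G}\bigl(T-(g\cdot h)\bigr)$ has invariant coefficients. One of these coefficients, a suitable elementary symmetric function of the $g\cdot h$, distinguishes $\underline{a}$ from $\underline{b}$: at $\underline{a}$ the polynomial $F$ equals $T^{|G|}$, while at $\underline{b}$ it has $T=1$ as a root and so is different. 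This direct argument avoids the characteristic assumptions hidden in \Cref{orbit}.

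Since $S_m$ generates $k[\A^m]^G$, we get $f_m(\underline{a})=f_m(\underline{b})$ if and only if $\underline{b}\in G\underline{a}$. Every $g\in G$ is a $k$-algebra automorphism. Each $e\in E$ is interpreted by a $G$-invariant, scalar-valued function, and $g$ fixes $k1_\A$ pointwise. Hence $G\leq\mathrm{Aut}(\A)_{(\mathcal{F},E)}$, and \Cref{type} shows that each fiber of $f_m$ lies inside a single $\emptyset$-type. This gives (iii).

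The only delicate point is the orbit-separation step. Using the elementary symmetric-function argument rather than \Cref{orbit} keeps the proof characteristic-free.
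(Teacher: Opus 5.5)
Your proposal is correct and follows the paper's proof essentially step for step: the same map $f_m$ built from $S_m$, conditions (i) and (ii) handled as in \Cref{QEInv}, and (iii) obtained from orbit separation together with $G\leq\mathrm{Aut}(\A)_{(\mathcal{F},E)}$ and \Cref{type}. Your symmetric-function argument is exactly the ``more elementary way'' the paper's subsequent remark alludes to. Note, however, that \Cref{orbit} carries no hidden characteristic restriction here, since finite (indeed all reductive) groups are geometrically reductive; the direct argument is therefore a welcome simplification rather than a necessity.
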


\begin{proof}
The sets $S_m$ are finite by assumption (note that such an $S_m$ always exist by reductiveness of $G$ (see below) and \Cref{fin}), so define
\[
f_m:\A^m\to k^{|S_m|},
\qquad
f_m(\underline{a})_i=s_i(\underline{a}).
\]

Since $k$ is algebraically closed, it admits quantifier elimination. Conditions (i) and (ii) of \Cref{theo} are verified exactly as in \Cref{QEInv}.

For (iii), note that since a finite group is discrete it is also by definition reductive and obviously has closed orbits. Hence by \Cref{orbit}, $k[\A^m]^G$ separates $G$-orbits. As before,
\[
f_m(\underline{a})=f_m(\underline{b})
\iff
\exists g\in G:\ g\underline{a}=\underline{b}.
\]
Since the functions interpreted by $E$ are $G$-invariant and scalar-valued, we have
\[
G\leq\mathrm{Aut}(\A)_{(\mathcal{F},E)}.
\]
Hence \Cref{type} implies that $\underline{a}$ and $\underline{b}$ have the same $\emptyset$-type in $(\mathcal{F},E)$.
\end{proof}

\medskip

\begin{remark}
The use of \Cref{orbit} in the above proof is in fact slightly overkill, as there are more elementary ways to see separation of orbits in the finite case. We still state it like this, in order to remain in the invariant theoretic spirit of the rest of this article.   
\end{remark}

\medskip

For a finite-dimensional associative $k$-algebra $\A$ ($k$ again algebraically closed), finiteness of $\mathrm{Aut}_k(\A)$ forces $\A$ to be commutative and semisimple: inner automorphisms rule out noncommutativity, while a nonzero Jacobson radical would imply infinitely many automorphisms due to the existence of nontrivial derivations. Hence, by Wedderburn's Theorem,this would imply $\A\cong k^n$. However finite subgroups of $\mathrm{Aut}_k(\A)$ exist and their invariant algebras can be studied (\cites{compu,fin}).
For non-associative algebras, $\mathrm{Aut}_k(\A)$ can even itself be finite (and non-trivial). For example in \cite{low}, all simple, two dimensional $k$-algebras have been classified and their corresponding automorphism groups (some of which are finite) and invariant algebras calculated explicitly.

\medskip
 For infinite reductive linear algebraic groups, the situation becomes much more hopeless. Conditions on specific points $x$ to give closed orbits exist (see for example \cites{KN,birkes}), but these are hard to apply for the whole space simultaneously.
 For example, if $G$ is defined over $\R$ with a regular representation on a real vector space $V$ and $G(\R)$ is compact, it follows from \cite{birkes} that the $G(\R)$-orbits in $V_\R$ are closed. However, this generally still fails to extend to all complex points of $V_\C$.\\
 In fact, the Hilbert--Mumford stability criterion \cite{GIT} implies that if $G$ contains a one-parameter subgroup $\lambda:\C^{\times}\to G$ acting nontrivially on a complex vector space $V$, then there exists some $0\neq v\in V$ such that $0\in \overline{\operatorname{Orb}(v,G)}$.

\section{Acknowledgements}
I want to greatly thank my supervisor Prof. Tim Netzer, for many helpful comments on a preliminary version of this article.\\
\\
I also gratefully acknowledge funding by the Austrian Academy of Sciences
(OAW), through a DOC-Scholarship.

\end{document}